\documentclass[11pt, reqno]{amsart}    

\usepackage{amsmath,amssymb,latexsym}
\usepackage{color}
\usepackage{xcolor}
\usepackage{graphicx}
\usepackage{cite}
\usepackage[colorlinks=true]{hyperref}
\hypersetup{urlcolor=blue, citecolor=red, linkcolor=blue}

\newcommand{\R}{\mathbb{R}}

\newcommand{\bigO}{\mathcal{O}}
\newcommand{\diff}{\mathrm{d}}

\newtheorem{theorem}{Theorem}[section]
\newtheorem{lemma}[theorem]{Lemma}

\newtheorem{remark}[theorem]{Remark}

\newtheorem*{main-theorem}{Main Theorem}
\newtheorem*{remark*}{Remark}
\newtheorem*{lemma*}{Lemma A.1}

\numberwithin{equation}{section}

\begin{document}

\title[Wave breaking for Whitham-type equations]{The wave breaking for Whitham-type equations revisited}

\author{Jean-Claude Saut}

\author{Yuexun Wang}

\address{ Universit\' e Paris-Saclay, CNRS, Laboratoire de Math\'  ematiques d'Orsay, 91405 Orsay, France.}
\email{jean-claude.saut@universite-paris-saclay.fr}

\address{	
	School of Mathematics and Statistics, 
	Lanzhou University, 370000 Lanzhou, China.}
\address{Universit\' e Paris-Saclay, CNRS, Laboratoire de Math\'  ematiques d'Orsay, 91405 Orsay, France.}

\email{yuexun.wang@universite-paris-saclay.fr}

\thanks{}

\subjclass[2010]{76B15, 76B03, 	35S30, 35A20}
\keywords{weak dispersion, shock formation}

\begin{abstract} We prove wave breaking (shock formation) for some Whitham-type
	equations which include the Burgers-Hilbert equation, the fractional Korteweg-de Vries equation, and the classical  Whitham equation. The result seems to be new for the Burgers-Hilbert equation. In the other cases we provide  simpler proofs than the known ones.
\end{abstract}
\maketitle

\section{Introduction}
We consider  nonlocal dispersive perturbations of the Burgers equation 

\begin{equation}\label{Whitype}
u_t+uu_x+\int_{-\infty}^{\infty}k(x-y)u_x(y,t)dy=0,
\end{equation}
where $k$ is a real-valued kernel measuring the (weak) dispersive effects.
This equation can also be written in the form
\begin{equation*}\label{Whibis}
u_t+uu_x-Lu_x=0,
\end{equation*}
where the Fourier multiplier operator $L$ is defined by 

$$\widehat{Lf}(\xi)=p(\xi)\hat{f}(\xi),$$
where $p=\hat{k}.$

A particular case is the fractional KdV equation (fKdV)
\begin{align}\label{eq:main-1}
\partial_t u+u\partial_xu-|D|^\alpha \partial_xu=0,
\end{align}
where \(u\) maps \(\R_t\times\R_x\) to \(\R\) and \(|D|^\alpha\) is the usual Fourier multiplier operator with  symbol \(|\xi|^\alpha\).

We will restrict for the fKdV equation to the {\it weakly dispersive} case $-1<\alpha<0$, and refer to \cite{LPS2, MPV} for a study of the case $0<\alpha<1$ which displays quite different (dispersive) properties.

When \(\alpha=-1\), \eqref{eq:main-1}  is the Burgers-Hilbert equation introduced in \cite{BH} as a model for waves with constant nonzero linearized frequency   providing an  effective equation for the motion of a vorticity discontinuity in a two-dimensional  flow of an inviscid,  incompressible fluid:
\begin{align}\label{eq:BH}
u_t+uu_x-\mathcal H u=0,
\end{align}
where $\mathcal H= \text{p.v.}\; \frac{1}{x}$ is the Hilbert transform with Fourier symbol $-i\text{sgn}\;\xi.$

We will also consider the Whitham equation introduced in \cite{Whi}
\begin{equation}\label{Whit}
\partial_tu+u\partial_xu+\int_\R K(x-y)\partial_y u(y,t)dy=0,
\end{equation}
where 
\begin{align*}
K(x)=\frac{1}{\sqrt{2\pi}}\int_\R e^{\mathrm{i}x\xi}\sqrt{\frac{\tanh \xi}{\xi}}\,\diff \xi,
\end{align*}
and its rescaled version 
\begin{align}\label{Whitresc}
\partial_t u+\epsilon u\partial_x u+\int_\R K_\epsilon(x-y)\partial_y u(y,t)\,\diff y=0, 
\end{align}
where  \(K_\epsilon\) is defined by
\begin{align}\label{res-kernal}
K_\epsilon(x)=\frac{1}{\sqrt{2\pi}}\int_\R e^{\mathrm{i}x\xi}\sqrt{\frac{\tanh \sqrt{\epsilon}\xi}{\sqrt{\epsilon}\xi}}\,\diff \xi=\frac{1}{\sqrt{\epsilon}}K\left(\frac{x}{\sqrt{\epsilon}}\right),
\end{align}
$\epsilon$ being a positive parameter, the long wave limit $\epsilon\to 0$ making the link with the KdV equation, see \cite{KLPS} and Section \ref{sec7} below.

Although of not clear physical relevance (see however \cite{KLPS} for a rigorous connection of the Whitham equation to the modeling of weakly nonlinear water waves), the fKdV equation \eqref{eq:main-1} when $-1<\alpha<0$ and the Whitham equation \eqref{Whit} (likes the fKdV equation with $\alpha=-\frac{1}{2}$ for high frequencies) are very rich toy models to investigate the effects of a weak dispersive term on the dynamics of a conservation law such as the Burgers equation
\begin{equation}\label{Bur}
\partial_t u+\epsilon u\partial_x u=0, \quad u(x,0)=\phi(x).
\end{equation}

It is well known that for the Burgers equation \eqref{Bur}  any non trivial non increasing $H^2$ initial data $\phi$ of size $O(1)$ will lead to a shock formation at a finite time of order  $O\left(\frac{1}{\epsilon ||u_0||_{H^2}}\right).$

The aim of the present paper is to investigate the effect of adding a weak dispersion on this phenomenon, in particular to see if the shock formation persists.  Such a question was already raised for the first time by Whitham in \cite{W} and  this issue has been considered in previous works that we describe now.  

Naumkin and Shishmarev \cite{NS} and Constantin and Escher \cite{CE} have proven a wave breaking phenomena for a Whitham type equation such as \eqref{Whitype} with a kernel $k$ satisfying 
\begin{equation*}
k\in C(\R)\cap L^1(\R), \;\text{symmetric and monotonically decreasing on}\; \R_+.
\end{equation*}

This result does not apply to the Whitham equation \eqref{Whit} since the Whitham kernel satisfies  
$K(0)=\infty.$

When $-1<\alpha<0,$ Castro, C\' ordoba and Gancedo have proven for the fKdV equation \eqref{eq:main-1} with some initial data in $L^2(\R)\cap C^{1+\delta}(\R)$ with   $0<\delta<1,$ a finite time blow-up of the $C^{1+\delta}(\R)$ norm, without proving the occurrence of a wave breaking, that is blow-up of the sup norm of gradient of the solution, the solution itself remaining bounded.

To our knowledge no rigorous proof of shock formation for the Burgers-Hilbert equation \eqref{eq:BH} has been established although the numerical simulations in \cite{BH} strongly suggest its existence.

Finally, the existence of a wave breaking for the fKdV equation \eqref{eq:main-1} when
$-1<\alpha<-\frac{1}{3}$ and for the Whitham equation \eqref{Whit} has been established in \cite{HT, Hur}. 

We refer to \cite{MR3317254, KLPS} for various numerical simulations of the fKV equation and Whitham equations, in particular for a description of the blow-up solutions.

We aim in this paper to provide a simple proof of wave breaking for the fKdV equation in the all range $-1\leq \alpha<-\frac{2}{5}$ (including thus the Burgers-Hilbert equation) and for the Whitham equation.
Contrary to \cite{HT,Hur} our proof does not use an infinite number of ODE's and hence less assumptions on the initial data are needed. The wave breaking for the Burgers-Hilbert equation \eqref{eq:BH} is not covered in \cite{HT,Hur} whose argument causes a logarithmic loss in estimating the term \(K_1(t,x)\) (see \eqref{BH-kernal}) which prevents the proof to work. We overcome this difficulty by using the cancellation property of Hilbert transform and Morrey's inequality to replace the integration by parts.  
For the Whitham equation \eqref{Whit}, our new observation is that one can use interpolation between \(\|\partial_x^3u\|_{L^2}\) (the singularity is too high which can not be used directly) and \(\|\partial_xu\|_{L^\infty}\) (the singularity is low) to control \(\|\partial_x^2u\|_{L^\infty}\), this balance allows us to give a very simple proof. This idea also works for the fKdV equation \eqref{eq:main-1} in the range $-\frac{1}{2}\leq \alpha<-\frac{2}{5}$. We remark that one could push \(\alpha\) forward up to $\alpha =0$  if one could obtain better estimates on higher derivatives of the solution. 

For the rescaled Whitham equation \eqref{Whitresc} we show that its wave breaking time has the order \(\bigO\big(\epsilon^{-1}[-\inf_{\R}\phi^\prime(x)]^{-1}\big)\) which confirms that the long-time existence in \cite{KLPS} is optimal.

\section{The main results}	
To present our main results, we will use the
best constants from Gagliardo-Nirenberg interpolation, Sobolev embedding and Morrey embedding inequalities
\begin{equation*}
\begin{aligned}
&C_{\mathrm{GN}}:=\inf_{f\neq 0} \frac{\|\partial_x^2u\|_{L^\infty(\R)}}{\|\partial_xu\|_{L^\infty(\R)}^{\frac{1}{3}}\|\partial_x^3u\|
	_{L^2(\R)}^{\frac{2}{3}}},\\
&C_{\mathrm{Sob}}:=\inf_{f\neq 0} \frac{\|f\|_{L^\infty(\R)}}{\|f\|_{H^1(\R)}},\quad
C_{\mathrm{Mor}}:=\inf_{f\neq 0}\frac{|f|_{C^{0,\frac{1}{2}}(\R)}}{\|f_x\|_{L^2(\R)}},
\end{aligned}
\end{equation*} 	
where the semi norm \(|\cdot|_{C^{0,\gamma}(\R)}\) is defined by
\begin{equation*}
\begin{aligned}
|f|_{C^{0,\gamma}(\R)}=\colon\sup_{x,y\in\R,\ x\neq y}\frac{|f(x)-f(y)|}{|x-y|^\gamma}.
\end{aligned}
\end{equation*} 

 We say that the solution of \eqref{eq:main-1} (\eqref{eq:BH} or \eqref{Whit} or \eqref{Whitresc}) exhibits wave breaking 
if there exists some \(T>0\) such that
\begin{equation*}
\begin{aligned}
|u(x,t)|<\infty,\quad x\in\R,\ t\in[0,T),
\end{aligned}
\end{equation*}	
but	
\begin{equation*}
\begin{aligned}
\inf_{\R}\partial_xu(x,t)\longrightarrow -\infty,\quad \text{as}\ t\longrightarrow T-.
\end{aligned}
\end{equation*}

Our first result can be stated precisely as follows:
\begin{theorem}[Burgers-Hilbert equation]\label{th:BH} Let \(\delta\in(0,1-\frac{\sqrt{3}}{2}]\). Assume that \(\phi\in H^2(\R)\) satisfies
	\begin{align}
	&\delta^2(\inf_{\R}\phi^\prime(x))^2> C_{\mathrm{Sob}}\|\phi\|_{H^2}+4\|\phi^{\prime}\|_{L^2}+64C_{\mathrm{Mor}}\|\phi^{\prime\prime}\|_{L^2},\label{t1c1}\\
	&-(1-\delta)^2\inf_{\R}\phi^\prime(x)> 6\bigg(\frac{\|\phi\|_{L^2}}{C_0}\bigg)+ 24C_{\mathrm{Mor}}\bigg(\frac{\|\phi^{\prime}\|_{L^2}}{C_0}\bigg),\label{t1c2}\\
	&-(1-\delta)^3\inf_{\R}\phi^\prime(x)> 8\bigg(\frac{\|\phi^{\prime}\|_{L^2}}{C_1}\bigg)+ 128C_{\mathrm{Mor}}\bigg(\frac{\|\phi^{\prime\prime}\|_{L^2}}{C_1}\bigg),\label{t1c3}
	\end{align}
	where the constant \(C_0\) and \(C_1\) satisfy	
	\begin{equation}\label{t1c4}
	\begin{aligned}
	\|\phi\|_{L^\infty}\leq\frac{C_0}{2},\quad 
	\|\phi^\prime\|_{L^\infty}\leq\frac{C_1}{2}.
	\end{aligned}
	\end{equation}	
	Then the solution to the Cauchy problem \eqref{eq:BH} with the initial data \(u(0,x)=\phi(x)\) exhibits wave breaking at some time \(T>0\). Moreover
	\begin{equation}\label{wb}
	\begin{aligned}
	-\frac{1}{\inf_{\R}\phi^\prime(x)}\frac{1}{1+\delta}<T<-\frac{1}{\inf_{\R}\phi^\prime(x)}\frac{1}{(1-\delta)^2}.
	\end{aligned}
	\end{equation}
	
\end{theorem}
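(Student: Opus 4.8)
The plan is to track the evolution of $v(t) := \partial_x u$ along characteristics. Let $X(t;\beta)$ solve $\dot X = u(X,t)$, $X(0)=\beta$, and write $V(t) = v(X(t;\beta),t)$. Differentiating \eqref{eq:BH} and restricting to the characteristic gives a Riccati-type equation
\begin{equation*}
\dot V = -V^2 + (\mathcal H u)_x(X(t),t) =: -V^2 + R(t),
\end{equation*}
where the forcing term $R(t) = \partial_x\mathcal H u = \mathcal H\partial_x u$ must be controlled. Choosing $\beta$ near the point where $\phi'$ attains its (negative) infimum, we want to show that $|R(t)|$ stays small compared with $V^2$ for as long as the solution exists, so that the comparison ODEs $\dot W = -(1-\delta)W^2$ from below and $\dot W = -(1+\delta)^{-1}\cdots$ — more precisely $\dot W \le -(1-\delta)^2 W^2$ and $\dot W \ge -(1+\delta)\,W^2$ type bounds — pinch $V(t)$ and force blow-up in the time window \eqref{wb}. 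The three smallness hypotheses \eqref{t1c1}--\eqref{t1c3} are exactly what is needed to run a continuity/bootstrap argument closing these bounds on $\|u\|_{L^\infty}$, $\|\partial_x u\|_{L^\infty}$ and $\|\partial_x^2 u\|_{L^\infty}$ respectively.

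The key steps, in order: (1) From the conservation of $\|u\|_{L^2}$ and the equations satisfied by $\partial_x u$ and $\partial_x^2 u$, derive differential inequalities for $\|\partial_x^j u\|_{L^2}$, $j=1,2$; since $\mathcal H$ is an isometry on $L^2$ the dispersive term contributes nothing to these $L^2$ estimates, so one gets $\|\partial_x^j u(t)\|_{L^2} \lesssim \exp(\int_0^t \|\partial_x u\|_{L^\infty})$ and, more usefully, these norms are controlled as long as we have an a priori bound on $\int_0^t\|\partial_x u\|_{L^\infty}$. (2) Bound $\|u\|_{L^\infty} \le C_{\mathrm{Sob}}\|u\|_{H^1}$ and $\|\partial_x u\|_{L^\infty}$, $\|\partial_x^2 u\|_{L^\infty}$ in terms of the $L^2$ norms via Sobolev and Morrey embeddings. (3) Estimate the forcing $R(t) = \mathcal H\partial_x u$ evaluated at $X(t)$: this is the crucial point. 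One cannot simply use $\|\mathcal H \partial_x u\|_{L^\infty} \lesssim \|\partial_x u\|_{H^1}$ naively because that reintroduces the very quantity we are bounding, and the naive integration-by-parts used in \cite{HT,Hur} loses a logarithm for $\alpha=-1$. Instead, as the introduction announces, split $\mathcal H\partial_x u$ using the p.v. kernel: the near-field $\int_{|y|<1}$ is handled by the cancellation $\int (u(x-y)-u(x))\,\frac{dy}{y}$ together with Morrey's $C^{0,1/2}$ bound on $u$ (or on $\partial_x u$), giving $|{\cdot}| \lesssim C_{\mathrm{Mor}}\|\partial_x u\|_{L^2}$; the far-field $\int_{|y|>1}$ is $\lesssim \|u\|_{L^2}$ by Cauchy–Schwarz. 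This yields $|R(t)| \lesssim \|u\|_{L^2} + C_{\mathrm{Mor}}\|\partial_x u\|_{L^2}$, and differentiating once more, $|\partial_x R(t)|\lesssim \|\partial_x u\|_{L^2}+C_{\mathrm{Mor}}\|\partial_x^2 u\|_{L^2}$, with no loss.

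(4) Set up the bootstrap: on a maximal interval $[0,T^*)$ assume $-\inf V(t) \le -(1-\delta)^{-1}\inf_\R\phi'$ or the analogous quantitative bounds; the Riccati equation then keeps $V(t)$ negative and the characteristic emanating from the minimizer stays the minimizer, so $\inf_\R\partial_x u(t) = V(t)$. Integrating $\dot V = -V^2 + R$ with $|R|$ controlled by $\delta^2(\inf\phi')^2$-type quantities via \eqref{t1c1}--\eqref{t1c3} gives, by ODE comparison, both the lower bound $V(t) \le \frac{\inf\phi'}{1-(1+\delta)^{-1}\,(\inf\phi')\, t}$-style inequality and the matching upper bound, hence $V(t)\to-\infty$ at some $T$ in the stated window, while $\|u\|_{L^\infty}$ remains finite (step 2 plus the controlled $L^2$ norms). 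A standard continuation argument shows the a priori bounds are never saturated before blow-up, closing the bootstrap. The main obstacle is unquestionably step (3): getting the forcing estimate for the Hilbert transform term with constants sharp enough — no logarithmic loss and explicit in $C_{\mathrm{Mor}}$ — so that the smallness conditions \eqref{t1c1}--\eqref{t1c3} suffice to trap $V$ between the two comparison solutions; everything else is bookkeeping of Grönwall and Riccati comparisons.
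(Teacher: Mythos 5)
Your overall architecture matches the paper's: characteristics, the Riccati equation for $v_1=\partial_x u$ with forcing $K_1=\mathcal H\partial_x u$, a continuity/bootstrap argument aiming at $|K_1(t,x)|<\delta^2 m^2(t)$ with $m(t)=\inf_\R\partial_x u(\cdot,t)$, $L^2$ energy identities for $\partial_x u$ and $\partial_x^2u$, and a near/far splitting of the Hilbert kernel with the cancellation--plus--Morrey bound on the near field. However, there is a genuine gap exactly at the step you single out as crucial. You split the principal-value integral at the \emph{fixed} radius $|y|=1$, which yields $|\mathcal H\partial_x u|\lesssim \|\partial_x u\|_{L^2}+C_{\mathrm{Mor}}\|\partial_x^2 u\|_{L^2}$. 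To close the bootstrap this must stay below $\delta^2m^2(t)=\delta^2m^2(0)q^{-2}(t)$. The energy identities, used one-sidedly via $-\int\partial_xu\,(\partial_x^ku)^2\le -m(t)\int(\partial_x^ku)^2$, give the growth rates $\|\partial_xu\|_{L^2}\lesssim q(t)^{-\frac{1}{2(1-\delta)^2}}$ and $\|\partial_x^2u\|_{L^2}\lesssim q(t)^{-\frac{5}{2(1-\delta)^2}}$; since $\frac{5}{2(1-\delta)^2}>\frac52>2$, the near-field contribution with a fixed cutoff grows strictly faster than $m^2(t)$ as $q\to0$, so the key inequality fails before the blow-up time and the contradiction argument cannot close. (It is worse still if, as in your step (1), one Gr\"onwalls against $\|\partial_xu\|_{L^\infty}\le C_1q^{-1}$ rather than against $-m(t)$, since $C_1$ may far exceed $-m(0)$.) The missing idea is to let the splitting radius shrink with time: splitting at $|y|=\eta$ gives $4C_{\mathrm{Mor}}\eta^{1/2}\|\partial_x^2u\|_{L^2}+2\eta^{-1/2}\|\partial_xu\|_{L^2}$, and the choice $\eta=q(t)^{\frac{2}{(1-\delta)^2}}$ balances the two terms at the geometric mean $q(t)^{-\frac{3}{2(1-\delta)^2}}$, which is $\le q(t)^{-2}$ precisely because $\delta\le 1-\frac{\sqrt3}{2}$. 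Without this time-dependent balancing (and the one-sided energy estimate keeping the exponents small enough), the hypotheses \eqref{t1c1}--\eqref{t1c3} do not suffice to trap $K_1$ below $\delta^2m^2(t)$.

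A secondary, patchable issue: you assert that ``the characteristic emanating from the minimizer stays the minimizer.'' This is neither needed nor obviously true. The paper instead works with the set $\Sigma_\delta(t)=\{x:\ v_1(t,x)\le(1-\delta)m(t)\}$ of near-minimizers, proves it is nested in time (Lemma \ref{le:a1}), and uses the two-sided Riccati comparison on that set (Lemmas \ref{le:a2}--\ref{le:a3}) to pinch $q(t)$ between two linear functions of $t$, which is what produces the window \eqref{wb}. You would need some such device to convert the pointwise ODE comparison into a statement about $\inf_\R\partial_xu$.
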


\bigskip
In order to deal with the Whitham equation  we first collect the following property of \(K(x)\) \cite{Hur}:

\vspace{0.3cm}

\begin{lemma}\label{le:Hur} There exist constants \(L_0, L_\infty>0\) such that
	\begin{equation*}
	\begin{aligned}
	K(x)\leq \frac{L_0}{\sqrt{|x|}}\quad   \mathrm{and}\ |K^\prime(x)|\leq \frac{L_0}{\sqrt{|x|^3}},\quad  \mathrm{for}\   0<|x|\leq 1,
	\end{aligned}
	\end{equation*}
	and
	\begin{equation*}
	\begin{aligned}
	\int_1^\infty |K^\prime(x)|\,\diff x\leq L_\infty.
	\end{aligned}
	\end{equation*}	
\end{lemma}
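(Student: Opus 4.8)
The plan is to read the behaviour of $K$ off that of the symbol $m(\xi):=\sqrt{\tanh\xi/\xi}$, of which $K$ is the Fourier transform, $K=\F[m]$ with $\F[f](x)=\tfrac1{\sqrt{2\pi}}\int_\R f(\xi)e^{\mathrm{i}x\xi}\,\diff\xi$ (legitimate since $m$ is even). The symbol $m$ is even, positive and $C^\infty(\R)$ with $m(0)=1$, and the only obstruction to $m\in\Schwartz(\R)$ is its slow decay $m(\xi)\sim|\xi|^{-1/2}$ at infinity. I would therefore set
\[
b(\xi):=m(\xi)-|\xi|^{-1/2}=|\xi|^{-1/2}\bigl(\sqrt{\tanh|\xi|}-1\bigr)
\]
and record two elementary facts. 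First, because $0\le 1-\sqrt{\tanh t}\le 1-\tanh t\le 2e^{-2t}$ for $t>0$, the function $b$ and each of its derivatives decay exponentially as $|\xi|\to\infty$. Second, near $\xi=0$ the expansion $\tanh|\xi|=|\xi|\bigl(1-\tfrac{\xi^2}{3}+\cdots\bigr)$ gives $\sqrt{\tanh|\xi|}=|\xi|^{1/2}g_0(\xi^2)$ with $g_0$ real analytic and $g_0(0)=1$, hence $b(\xi)=g_0(\xi^2)-|\xi|^{-1/2}$ and $\xi b(\xi)=\xi g_0(\xi^2)-\operatorname{sgn}(\xi)|\xi|^{1/2}$; in particular $b\in L^1(\R)$ and $\xi b\in L^1(\R)$, each having only an integrable algebraic singularity at the origin.

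For the estimates on $0<|x|\le1$ I would use the classical identity $\F\bigl[|\xi|^{-1/2}\bigr]=c_0|x|^{-1/2}$ in $\Schwartz'(\R)$ with $c_0>0$. Applying $\F$ to the splitting $m=|\xi|^{-1/2}+b$ then yields, for $x\ne0$,
\[
K(x)=c_0|x|^{-1/2}+\widehat b(x),\qquad K'(x)=-\tfrac{c_0}{2}\operatorname{sgn}(x)|x|^{-3/2}+\F[\mathrm{i}\xi b](x),
\]
where $\widehat b=\F[b]$ and $\F[\mathrm{i}\xi b]=(\widehat b)'$ are continuous, with $\|\widehat b\|_{L^\infty}\le(2\pi)^{-1/2}\|b\|_{L^1}$ and $\|\F[\mathrm{i}\xi b]\|_{L^\infty}\le(2\pi)^{-1/2}\|\xi b\|_{L^1}$ (differentiation under the integral sign, using $\xi b\in L^1$). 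Since $|x|^{-1/2}\ge1$ and $|x|^{-3/2}\ge1$ on $0<|x|\le1$, both inequalities of the Lemma follow on that range with
\[
L_0:=\max\Bigl\{\,c_0+(2\pi)^{-1/2}\|b\|_{L^1},\ \tfrac{c_0}{2}+(2\pi)^{-1/2}\|\xi b\|_{L^1}\,\Bigr\}.
\]

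For $\int_1^\infty|K'(x)|\,\diff x$ the homogeneous part contributes $\int_1^\infty\tfrac{c_0}{2}x^{-3/2}\,\diff x=c_0<\infty$, so I need only bound $\int_1^\infty|\F[\mathrm{i}\xi b](x)|\,\diff x$. Fixing $\chi\in C_c^\infty(\R)$ with $\chi\equiv1$ near $0$ and $\supp\chi\subset(-1,1)$, I would split $\xi b=(1-\chi)\xi b+\chi\,\xi b$. The first piece is $C^\infty(\R)$ with exponentially decaying derivatives, hence Schwartz, so its transform is integrable over $[1,\infty)$. For the second, $\chi\,\xi b=\chi(\xi)\xi g_0(\xi^2)-\chi(\xi)\operatorname{sgn}(\xi)|\xi|^{1/2}$: the first term is $C_c^\infty$ with rapidly decaying transform, and for the second term one integration by parts (legitimate, the function being compactly supported and absolutely continuous with derivative $\tfrac12\chi(\xi)|\xi|^{-1/2}+\operatorname{sgn}(\xi)\chi'(\xi)|\xi|^{1/2}\in L^1$) reduces matters to the decay of $\F[\chi(\xi)|\xi|^{-1/2}]$; writing this as a convolution $\F[\chi(\xi)|\xi|^{-1/2}]=(2\pi)^{-1/2}\,\widehat\chi\ast\bigl(c_0|\cdot|^{-1/2}\bigr)$ and splitting the integral according to whether the integration variable is $\le|x|/2$ or $>|x|/2$ gives $\F[\chi(\xi)|\xi|^{-1/2}](x)=\bigO(|x|^{-1/2})$ for $|x|\ge1$, whence $\F[\chi(\xi)\operatorname{sgn}(\xi)|\xi|^{1/2}](x)=\bigO(|x|^{-3/2})$ for $|x|\ge1$, which is integrable on $[1,\infty)$. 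Summing the three contributions gives $\int_1^\infty|K'(x)|\,\diff x\le L_\infty$ for a suitable $L_\infty>0$.

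The main obstacle is exactly this last point: estimating the tail of $K'$ amounts to showing that the $|x|^{-3/2}$ singularity which $K'$ develops at $x=0$ does not leak into the region $|x|\ge1$, i.e. the $\bigO(|x|^{-3/2})$ decay of $\F[\chi(\xi)\operatorname{sgn}(\xi)|\xi|^{1/2}]$. This is precisely where the smoothness of the cut-off $\chi$ is used — a sharp truncation at $|\xi|=1$ would instead contribute a non-integrable tail of size $\sim|x|^{-1}$ — together with a little harmonic analysis (convolution with a Schwartz function, equivalently the Fourier transform of the homogeneous distribution $|\xi|^{-1/2}$). Everything else is bookkeeping built on the expansion of $\sqrt{\tanh\xi/\xi}$ recorded at the outset.
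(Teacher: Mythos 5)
Your proof is correct. Note, however, that the paper does not prove this lemma at all: it is imported verbatim from the reference [Hur, \emph{Adv.\ Math.} 317 (2017)], so there is no in-paper argument to compare against. What you have written is essentially a self-contained reconstruction of the standard argument behind that citation: peel off the leading homogeneous part $|\xi|^{-1/2}$ of the symbol $\sqrt{\tanh\xi/\xi}$, use $\F[|\xi|^{-1/2}]=c_0|x|^{-1/2}$ to produce the $|x|^{-1/2}$ and $|x|^{-3/2}$ singularities at the origin, and control the remainder $b$ through $b,\ \xi b\in L^1$ together with its exponential decay at infinity. The one genuinely delicate point --- that the $|\xi|^{1/2}$-type non-smoothness of $\xi b$ at $\xi=0$ produces only an $\bigO(|x|^{-3/2})$ tail in $\F[\mathrm{i}\xi b]$, so that $K'$ is integrable on $[1,\infty)$ --- you handle correctly by a smooth cut-off, one integration by parts, and the convolution estimate for $\widehat\chi\ast|\cdot|^{-1/2}$; as you observe, a sharp truncation would fail here. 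All the auxiliary claims (exponential decay of $b$ and its derivatives via $0\le 1-\sqrt{\tanh t}\le 2e^{-2t}$, analyticity of $g_0$ near $0$, $\|\widehat b\|_{L^\infty}\le(2\pi)^{-1/2}\|b\|_{L^1}$) check out, so the argument stands as a complete proof of the lemma.
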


Our result on the Whitham equation is as follows:
\begin{theorem}[Whitham equation]\label{th:W} Let \(\delta\in(0,1-\frac{2\sqrt{2}}{3}]\). Assume that \(\phi\in H^3(\R)\) satisfies
	\begin{align}
	&\delta^2(\inf_{\R}\phi^\prime(x))^2> 4L_0C_{\mathrm{Sob}}\|\phi\|_{H^3}+2C_1(3L_0+L_\infty)+36L_0C_{\mathrm{GN}}C_1^{\frac{1}{3}}\|\phi^{\prime\prime\prime}\|_{L^2}^{\frac{2}{3}},\label{t2c1}\\
	&-(1-\delta)^2\inf_{\R}\phi^\prime(x)> 8(3L_0+L_\infty)+16L_0\bigg(\frac{C_1}{C_0}\bigg),\label{t2c2}\\
	&-(1-\delta)^3\inf_{\R}\phi^\prime(x)> 4(3L_0+L_\infty)+36L_0C_{\mathrm{GN}}\bigg(\frac{\|\phi^{\prime\prime\prime}\|_{L^2}}{C_1}\bigg)^{\frac{2}{3}},\label{t2c3}
	\end{align}
	where the constant \(C_0\) and \(C_1\) satisfy 	
	\begin{equation}\label{t2c4}
	\begin{aligned}
	\|\phi\|_{L^\infty}\leq \frac{C_0}{2},\quad 
	\|\phi^\prime\|_{L^\infty}\leq\frac{C_1}{2}.
	\end{aligned}
	\end{equation}	
	Then the solution of the Cauchy problem \eqref{Whit} with  initial data \(u(0,x)=\phi(x)\) exhibits wave breaking at some time \(T>0\). Moreover
	\begin{equation*}
	\begin{aligned}
	-\frac{1}{\inf_{\R}\phi^\prime(x)}\frac{1}{1+\delta}<T<-\frac{1}{\inf_{\R}\phi^\prime(x)}\frac{1}{(1-\delta)^2}.
	\end{aligned}
	\end{equation*}
	
\end{theorem}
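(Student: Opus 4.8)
\emph{Plan.} The idea is to reduce \eqref{Whit} to the same scalar Riccati inequality as the inviscid Burgers equation, perturbed only by a nonlocal term that I will bound by $\delta\,m(t)^2$. Put $m(t)=\inf_\R\partial_xu(\cdot,t)$; this is locally Lipschitz in $t$, and at a.e.\ $t$ one has $\dot m(t)=\partial_t\partial_xu(x(t),t)$ for a point $x(t)$ realising the infimum, where $\partial_x^2u(x(t),t)=0$. Differentiating \eqref{Whit} in $x$ therefore kills the transport term $u\partial_x^2u$ at $x(t)$ and leaves
\begin{equation*}
\dot m(t)=-m(t)^2-R(t),\qquad R(t):=\big(K*\partial_x^2u\big)(x(t),t).
\end{equation*}
Thus the theorem reduces to proving $|R(t)|\le\delta\,m(t)^2$ on the maximal existence interval $[0,T_{\max})$, together with enough a priori control of $\|u\|_{L^\infty}$ and $\|\partial_x^3u\|_{L^2}$ to sustain this.

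The estimate of $R$ is the crux. I would split $K*\partial_x^2u$ into the part with $|x-y|\le\rho$ and the part with $|x-y|>\rho$ for a scale $\rho\in(0,1]$. On the inner part the singularity $K(x)\le L_0|x|^{-1/2}$ of Lemma \ref{le:Hur} is integrable and contributes $\lesssim L_0\rho^{1/2}\|\partial_x^2u\|_{L^\infty}$; on the outer part, one integration by parts turns it into a $K'$-convolution, controlled by $|K'(x)|\le L_0|x|^{-3/2}$ (for $\rho<|x|\le1$) and $\int_1^\infty|K'|\le L_\infty$, with boundary terms of size $\lesssim L_0\rho^{-1/2}\|\partial_xu\|_{L^\infty}$, giving altogether $\lesssim(L_0\rho^{-1/2}+L_\infty)\|\partial_xu\|_{L^\infty}$. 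Choosing $\rho\sim\|\partial_xu\|_{L^\infty}/\|\partial_x^2u\|_{L^\infty}$ (and $\rho=1$ when this exceeds $1$) balances the two pieces and, after the Gagliardo--Nirenberg inequality $\|\partial_x^2u\|_{L^\infty}\le C_{\mathrm{GN}}\|\partial_xu\|_{L^\infty}^{1/3}\|\partial_x^3u\|_{L^2}^{2/3}$, yields
\begin{equation*}
|R(t)|\lesssim L_0\,\|\partial_xu\|_{L^\infty}^{2/3}\|\partial_x^3u\|_{L^2}^{1/3}+L_\infty\,\|\partial_xu\|_{L^\infty}.
\end{equation*}
This half-power gain is essential: the naive bound $|R|\lesssim\|\partial_x^2u\|_{L^\infty}\lesssim\|\partial_xu\|_{L^\infty}^{1/3}\|\partial_x^3u\|_{L^2}^{2/3}$ would force $\|\partial_x^3u\|_{L^2}\lesssim(-m)^{5/2}$ near the singularity, which is false, whereas the exponent $1/3$ above only asks for control of $\|\partial_x^3u\|_{L^2}$ by a (larger) power of $-m$.

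For that control I would run energy estimates. The symbol of $K*\partial_x$ is $\mathrm i\xi\widehat K(\xi)$, purely imaginary since $\widehat K$ is real and even, so the dispersive term is skew-adjoint on every $H^s$ and never enters $\tfrac{d}{dt}\|u\|_{H^s}^2$; hence $\|u\|_{L^2}$ is conserved and, integrating by parts in the Burgers term, $\tfrac{d}{dt}\|\partial_x^3u\|_{L^2}^2=-7\int_\R\partial_xu\,(\partial_x^3u)^2\,\diff x\le 7\,(-m)_+\|\partial_x^3u\|_{L^2}^2$, with an easier companion bound for $\|u\|_{L^\infty}$. Combining this with the lower Riccati bound $\tfrac{d}{dt}(-m)\ge(1-\delta)m^2$, which holds as long as $|R|\le\delta m^2$, one finds that $\|\partial_x^3u(t)\|_{L^2}\,(-m(t))^{-7/(2(1-\delta))}$ is nonincreasing, hence $\|\partial_x^3u(t)\|_{L^2}\le\|\phi^{\prime\prime\prime}\|_{L^2}\big(m(t)/\inf_\R\phi^\prime\big)^{7/(2(1-\delta))}$. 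Feeding this back into the bound for $R$, the requirement $|R(t)|\le\delta\,m(t)^2$ becomes a lower bound on $|\inf_\R\phi^\prime|$ in terms of $\|\phi^{\prime\prime\prime}\|_{L^2}$, $C_{\mathrm{GN}}$, $L_0$, $L_\infty$, $C_0$, $C_1$ --- exactly of the shape of \eqref{t2c1}--\eqref{t2c3} --- and it closes precisely because $\delta\le1-\tfrac{2\sqrt2}{3}$ makes the exponent $7/(2(1-\delta))$ small enough. All of this is a standard continuity argument on $[0,T_{\max})$: one posits $\|u\|_{L^\infty}\le C_0$, the displayed bound on $\|\partial_x^3u\|_{L^2}$, and $|R|\le\delta m^2$; then \eqref{t2c1}, \eqref{t2c2}, \eqref{t2c3} (with the doubling room built into \eqref{t2c4}) force each of these to improve strictly, so all three persist up to $T_{\max}$.

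Finally, $-(1+\delta)m^2\le\dot m\le-(1-\delta)m^2$ with $m(0)=\inf_\R\phi^\prime<0$ traps $t\mapsto-1/m(t)$ between the affine functions $-1/\inf_\R\phi^\prime-(1\pm\delta)t$, so it must vanish --- i.e.\ $m\to-\infty$ --- at a time $T$ with
\begin{equation*}
-\frac{1}{\inf_\R\phi^\prime}\,\frac{1}{1+\delta}<T\le-\frac{1}{\inf_\R\phi^\prime}\,\frac{1}{1-\delta}<-\frac{1}{\inf_\R\phi^\prime}\,\frac{1}{(1-\delta)^2},
\end{equation*}
while $\|u(\cdot,t)\|_{L^\infty}\le C_0<\infty$ on $[0,T)$; this is the asserted wave breaking. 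I expect the real work to be entirely in the second and third steps: extracting the half-power from the convolution splitting, and then keeping the genuinely growing norm $\|\partial_x^3u\|_{L^2}$ below the right power of $-m$ --- the same tension that confines the companion fKdV result to $-\tfrac12\le\alpha<-\tfrac25$ and that a sharper higher-derivative estimate would relax.
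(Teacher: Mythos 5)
Your proposal is correct and follows essentially the same route as the paper: the same splitting of $K*\partial_x^2u$ at a scale $\rho$ with integration by parts on the outer piece, the same Gagliardo--Nirenberg interpolation of $\|\partial_x^2u\|_{L^\infty}$ between $\|\partial_xu\|_{L^\infty}$ and $\|\partial_x^3u\|_{L^2}$, the same $-7\int\partial_xu(\partial_x^3u)^2$ energy identity, and the same continuity/bootstrap closing under \eqref{t2c1}--\eqref{t2c3}. The only (harmless) deviations are that you differentiate $m(t)=\inf_\R\partial_xu$ directly at the minimizer rather than tracking $v_1$ along all particle paths with the appendix lemmas on $\Sigma_\delta(t)$, and you use the threshold $\delta m^2$ where the paper uses $\delta^2m^2$; this even yields the slightly sharper upper bound $T\le -\big(\inf_\R\phi^\prime\big)^{-1}(1-\delta)^{-1}$.
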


\begin{theorem}[fKdV equation: \(\alpha\in(-1,-\frac{2}{5})\)]\label{th:fKdV} Let \(\delta>0\) be sufficiently small and \(\alpha\in\big(-1,\frac{5(1-\delta)^2-7}{7-2(1-\delta)^2}\big)\). Assume that \(\phi\in H^3(\R)\) satisfies
	\begin{align}
	&\delta^2(\inf_{\R}\phi^\prime(x))^2> C_{\mathrm{Sob}}\|\phi\|_{H^2}+\frac{4C_1}{1+\alpha}+\frac{18C_{\mathrm{GN}}}{-\alpha}\bigg(C_1^{\frac{1}{3}}\|\phi^{\prime\prime\prime}\|_{L^2}^{\frac{2}{3}}\bigg),\label{t3c1}\\
	&-(1-\delta)^2\inf_{\R}\phi^\prime(x)> \frac{8}{-\alpha(1+\alpha)}+\frac{2}{\alpha^2 }\bigg(\frac{C_1}{C_0}\bigg),\label{t3c2}\\
	&-(1-\delta)^3\inf_{\R}\phi^\prime(x)> \frac{8}{1+\alpha}+\frac{36C_{\mathrm{GN}}}{-\alpha}\bigg(\frac{\|\phi^{\prime\prime\prime}\|_{L^2}}{C_1}\bigg)^{\frac{2}{3}},\label{t3c3}
	\end{align}
	where the constant \(C_0\) and \(C_1\) satisfy	
	\begin{equation}\label{t3c4}
	\begin{aligned}
	\|\phi\|_{L^\infty}\leq \frac{C_0}{2},\quad 
	\|\phi^\prime\|_{L^\infty}\leq \frac{C_1}{2}.
	\end{aligned}
	\end{equation}	
	Then the solution of the Cauchy problem \eqref{eq:main-1} with the initial data \(u(0,x)=\phi(x)\) exhibits wave breaking at some time \(T>0\). Moreover
	\begin{equation*}
	\begin{aligned}
	-\frac{1}{\inf_{\R}\phi^\prime(x)}\frac{1}{1+\delta}<T<-\frac{1}{\inf_{\R}\phi^\prime(x)}\frac{1}{(1-\delta)^2}.
	\end{aligned}
	\end{equation*}
	
\end{theorem}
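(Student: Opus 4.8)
The plan is to adapt the method of characteristics / Riccati-ODE argument that underlies Theorems~\ref{th:BH} and~\ref{th:W} to the fractional case $|D|^\alpha$. Let $x(t)$ solve $\dot x = u(x,t)$ and set $v(t) = \partial_x u(x(t),t)$, $w(t) = \partial_x^2 u(x(t),t)$. Differentiating \eqref{eq:main-1} once and twice in $x$ along the flow gives, schematically,
\begin{equation*}
\dot v = -v^2 + \partial_x |D|^\alpha \partial_x u(x(t),t), \qquad
\dot w = -3vw + \partial_x^2 |D|^\alpha \partial_x u(x(t),t).
\end{equation*}
I would track the non-autonomous Riccati equation $\dot v = -v^2 + R_1$ where $R_1 = \partial_x |D|^\alpha\partial_x u$ is the dispersive remainder. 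Choosing the initial point $x_0$ so that $v(0) = \inf_\R \phi'$ (or close to it), the strategy is a continuity/bootstrap argument: on the maximal interval where $v(t) \le \tfrac12(1-\delta)\inf_\R\phi' $ (say) stays comparably large and negative, show $|R_1|$ is dominated by $\delta^2(\inf_\R\phi')^2$, so that $\dot v \le -(1-\delta)^2 v^2$ type inequality holds, forcing $v \to -\infty$ in finite time with the two-sided bound on $T$ as stated.

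The key analytic input is bounding the dispersive terms $R_1$ and $R_2 = \partial_x^2|D|^\alpha\partial_x u$ in $L^\infty$. Since $-1 < \alpha < 0$, the operator $|D|^\alpha$ is smoothing of order $-\alpha \in (0,1)$. For $R_1 = |D|^\alpha \partial_x^2 u$: split into low and high frequencies, or write $|D|^\alpha\partial_x^2 = |D|^{\alpha}\,\partial_x \cdot \partial_x$ and use that $|D|^\alpha \partial_x$ has a kernel behaving like $|x|^{-2-\alpha}$, which is integrable near infinity but singular at the origin with an integrable-against-Hölder rate because $-2-\alpha > -2$; pairing with the $C^{0,\beta}$-modulus of $\partial_x u$ for suitable $\beta$, one gets a bound of the form $C_\alpha \|\partial_x u\|_{L^\infty}^{1-\theta}\|\partial_x^3 u\|_{L^2}^{\theta}$, which is exactly where the Gagliardo--Nirenberg constant $C_{\mathrm{GN}}$ and the interpolation between $\|\partial_x^3 u\|_{L^2}$ and $\|\partial_x u\|_{L^\infty}$ enter (and where the restriction $\alpha \ge -\tfrac12$, coming from the mention in the introduction, makes $\|\partial_x^2 u\|_{L^\infty}$ the natural quantity; for $\alpha$ down to $-1$ one presumably uses the Morrey/Hilbert-transform trick of Theorem~\ref{th:BH} instead). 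Similarly $R_2$ reduces to controlling $|D|^\alpha \partial_x^3 u$, which by the same smoothing and interpolation is bounded by $C_{\mathrm{GN}}$-type expressions in $\|\partial_x^3 u\|_{L^2}$. The constants $\tfrac{1}{1+\alpha}$, $\tfrac{1}{-\alpha}$, $\tfrac{1}{\alpha^2}$ appearing in \eqref{t3c1}--\eqref{t3c3} are precisely the kernel-integral constants from these estimates.

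Next I would close the bootstrap on the auxiliary norms. The quantities $\|u\|_{L^\infty}$, $\|\partial_x u\|_{L^\infty}$, $\|\partial_x^3 u\|_{L^2}$ must be shown to stay below $C_0/2 \cdot 2 = C_0$, $C_1$, and a finite bound respectively on $[0,T)$. For $\|u\|_{L^\infty}$ and $\|\partial_x^3 u\|_{L^2}$ one uses conservation of $\|u\|_{L^2}$ and an energy estimate for $\|\partial_x^3 u\|_{L^2}$ (the dispersive term is skew-adjoint so it does not contribute, leaving a commutator/nonlinearity term of Burgers type controlled by $\int_0^t \|\partial_x u\|_{L^\infty}$); Sobolev embedding then gives the $L^\infty$ bounds via $C_{\mathrm{Sob}}$. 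For $\|\partial_x u\|_{L^\infty}$ one runs essentially the same characteristic ODE $\dot v = -v^2 + R_1$ but now at an arbitrary point, deriving a differential inequality $\dot v \ge -v^2 - R_1^+$ that keeps $v$ from below; this is where conditions \eqref{t3c2} with $C_1/C_0$ and \eqref{t3c3} with $\|\partial_x^3u\|_{L^2}/C_1$ are used. Assembling these, one gets that on $[0,T)$ the dispersive remainder is genuinely a perturbation, the Riccati comparison applies, and $T$ satisfies the stated bounds.

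The main obstacle I anticipate is the borderline integrability of the kernel of $|D|^\alpha\partial_x^2$ (and $|D|^\alpha\partial_x^3$) at the origin: naively one would integrate by parts and lose a logarithm (exactly the issue the authors flag for the Burgers--Hilbert case via $K_1(t,x)$), so the delicate point is to replace that integration by parts with a commutator estimate using the Hölder modulus of continuity (Morrey embedding, $C_{\mathrm{Mor}}$) or with the precise Gagliardo--Nirenberg interpolation $\|\partial_x^2 u\|_{L^\infty} \le C_{\mathrm{GN}}\|\partial_x u\|_{L^\infty}^{1/3}\|\partial_x^3 u\|_{L^2}^{2/3}$, and to do so with \emph{explicit} constants so that the smallness conditions \eqref{t3c1}--\eqref{t3c4} can be verified to be exactly what is needed. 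Tracking these constants through the low/high-frequency split — and checking that the admissible range of $\alpha$ is precisely $\big(-1, \tfrac{5(1-\delta)^2-7}{7-2(1-\delta)^2}\big)$ for $\delta$ small, which degenerates to $\alpha < -\tfrac25$ as $\delta \to 0$ — is the bookkeeping-heavy heart of the argument.
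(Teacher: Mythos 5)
Your architecture --- characteristics, the Riccati equation for $v=\partial_xu$ along the flow, a continuity/bootstrap argument on $\|u\|_{L^\infty}$ and $\|\partial_xu\|_{L^\infty}$, the near/far splitting of the kernel $|y|^{-2-\alpha}$ (Lipschitz modulus of $\partial_xu$ near the origin, $L^\infty$ and integrability of the tail for $|y|$ large), and the Gagliardo--Nirenberg interpolation $\|\partial_x^2u\|_{L^\infty}\le C_{\mathrm{GN}}\|\partial_xu\|_{L^\infty}^{1/3}\|\partial_x^3u\|_{L^2}^{2/3}$ fed by an $L^2$ energy estimate for $\partial_x^3u$ --- is exactly the paper's. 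Two points, however, would derail the proof as literally written.

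First, the bootstrap target cannot be the constant $\delta^2(\inf_\R\phi')^2$. The estimate you outline (and the one the paper proves) gives $|R_1(t,\cdot)|\lesssim q(t)^{-2}$, where $q(t)=m(0)/m(t)$ and $m(t)=\inf_x\partial_xu(x,t)$, so $q(t)\to0$ at breaking and $R_1$ genuinely blows up; a time-independent bound is false. The correct target is the time-dependent one $|R_1(t,\cdot)|<\delta^2m^2(t)$: both sides grow at the same rate $q^{-2}$, and on the set where $v\le(1-\delta)m(t)$ this still yields $|R_1|\le\delta^2(1-\delta)^{-2}v^2<\delta v^2$, which is all the Riccati comparison needs for the two-sided bound on $T$. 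Verifying that the constant in front of $q^{-2}$ is below $\delta^2m^2(0)$ is precisely where \eqref{t3c1} enters.

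Second, the equation $\dot w=-3vw+R_2$ for $w=\partial_x^2u$ along the flow, together with an $L^\infty$ bound on $R_2=|D|^\alpha\partial_x^3u$, is both unnecessary and unavailable at $H^3$ regularity: the near-origin part of $|D|^\alpha\partial_x^3u$ would require a H\"older modulus of $\partial_x^3u$, i.e.\ essentially a fourth derivative. The paper never evolves $\partial_x^2u$ along characteristics; $\|\partial_x^2u\|_{L^\infty}$ comes purely from the Gagliardo--Nirenberg interpolation you already mention plus the energy identity $\frac{\diff}{\diff t}\|\partial_x^3u\|_{L^2}^2=-7\int_\R\partial_xu\,(\partial_x^3u)^2\,\diff x\le-7m(t)\|\partial_x^3u\|_{L^2}^2$. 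Dropping the $w$-equation and keeping only the GN route closes the argument for the whole range $\alpha\in(-1,-\tfrac25)$ --- no Morrey/Hilbert-transform trick is needed for fKdV, contrary to your hedge; that device is reserved for the genuinely borderline case $\alpha=-1$.
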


		\begin{remark}
		\rm{It is easy to see that there exists some \(\phi\in H^2(\R)\) satisfying  \eqref{t1c1}-\eqref{t1c3} and \eqref{t1c4} in Theorem \ref{th:BH}. Indeed, given any \(\phi_0\in H^2(\R)\) with \(\inf_{\R}\phi_0^\prime(x)<0\), let \(\phi=\lambda\phi_0\)
			with \(\lambda>0\) and \(C_0=2\lambda\|\phi_0\|_{L^\infty},C_1=2\lambda\|\phi_0^\prime\|_{L^\infty}\) that obviously satisfy \eqref{t1c4}, then
			choosing \(\lambda\) sufficiently large, one checks that \(\phi\) satisfies \eqref{t1c1}-\eqref{t1c3} by comparing the powers of $\lambda$ in  both sides of each inequality. One can similarly analyze the assumptions in Theorem \ref{th:W} and \ref{th:fKdV}.}
		\end{remark}

\begin{remark}
	\rm{The result in Theorem \ref{th:BH} (Theorem \ref{th:fKdV}) does not contradict the existence of smooth solutions of the Burgers-Hilbert equation (fKdV equation) with initial data of size $\bigO(\epsilon)$ on the enhanced time scale $\bigO(1/\epsilon^2)$ which has been established in \cite{HI, HITW} (\cite{EW}).} 
\end{remark}

We finally give a very simple blowup result on the Burgers-Hilbert equation which reads as:
\begin{theorem}\label{th:main-2}  Assume that \(\phi\in H^2(\R)\) satisfies
	\begin{align}\label{b1}
	F(0)=\colon-\int_0^\infty \big(\phi(x)-\phi(0)\big)\exp(-x)\, \diff x\geq
	4\|\phi\|_{L^2}^{\frac{1}{2}}.
	\end{align}
	Then,  the lifespan \(T^*\) of the solution \(u\in C\big([0,T^*);H^2(\R)\big)\) to the Cauchy problem \eqref{eq:BH} with the initial data \(u(0,x)=\phi(x)\) is bounded above by
	\begin{align}\label{b2}
	T^*\leq\frac{4}{F(0)}=\colon T^{**},
	\end{align}
and
    \begin{align}\label{b2.5}
    \lim_{t\rightarrow T^{**}-}\|u_x\|_{L^\infty}=\infty.
    \end{align}
\end{theorem}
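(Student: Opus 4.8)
The plan is to run a weighted-integral (virial) argument built around the characteristic issuing from the origin. Let $\eta(t)$ solve $\dot\eta(t)=u(\eta(t),t)$ with $\eta(0)=0$ (well defined on $[0,T^*)$ since $u\in C([0,T^*);H^2)$), and set
\begin{equation*}
F(t):=-\int_0^\infty\big(u(y+\eta(t),t)-u(\eta(t),t)\big)e^{-y}\,\diff y=-\int_0^\infty u_x(y+\eta(t),t)e^{-y}\,\diff y .
\end{equation*}
Because $\eta(0)=0$, the value $F(0)$ is precisely the quantity in \eqref{b1}. Moreover $e^{-y}\mathbf 1_{y>0}\,\diff y$ is a probability measure on $[0,\infty)$, so $|F(t)|\le\|u_x(\cdot,t)\|_{L^\infty}$; hence it suffices to show that $F(t)\to\infty$ at a time $T^*\le T^{**}$, which yields both \eqref{b2} and \eqref{b2.5} (the latter also being forced by the standard $H^2$ blow-up criterion $\int_0^{T^*}\|u_x(\cdot,t)\|_{L^\infty}\,\diff t=\infty$).

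First I would differentiate $F$. In the moving frame $\tilde u(y,t)=u(y+\eta(t),t)$, using that $\mathcal H$ commutes with translations, the equation becomes $\partial_t\tilde u=\mathcal H\tilde u-W\,\partial_y W$ with $W:=\tilde u-\tilde u(0,\cdot)$, and crucially $W(0,\cdot)=0$. Differentiating $F=-\int_0^\infty W e^{-y}\,\diff y$ and integrating the nonlinear term by parts against $e^{-y}$ (the $y=0$ boundary term drops because $W(0,\cdot)=0$) gives
\begin{equation*}
F'(t)=\tfrac12\int_0^\infty W(y,t)^2e^{-y}\,\diff y-R(t),\qquad R(t):=\int_0^\infty\big[(\mathcal Hu)(y+\eta,t)-(\mathcal Hu)(\eta,t)\big]e^{-y}\,\diff y .
\end{equation*}
By the Cauchy–Schwarz (Jensen) inequality for the probability measure $e^{-y}\,\diff y$, $\int_0^\infty W^2e^{-y}\,\diff y\ge\big(\int_0^\infty We^{-y}\,\diff y\big)^2=F(t)^2$, so $F'(t)\ge\tfrac12F(t)^2-R(t)$.

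The heart of the matter is controlling $R(t)$; I would split $R=R_1-R_2$ with $R_1(t)=\int_0^\infty(\mathcal Hu)(y+\eta,t)e^{-y}\,\diff y$ and $R_2(t)=(\mathcal Hu)(\eta(t),t)$. For $R_1$ I would exploit the skew-adjointness of $\mathcal H$ to transfer it onto the weight $g(y)=e^{-y}\mathbf 1_{y>0}\in L^2$, writing $R_1(t)=-\int_{\R}u(x,t)(\mathcal Hg)(x-\eta(t))\,\diff x$, and then use the conservation of $\|u(\cdot,t)\|_{L^2}=\|\phi\|_{L^2}$ (immediate from the skew-adjointness of $\mathcal H$ and the divergence form $\partial_x(u^2/2)$ of the nonlinearity) to obtain $|R_1(t)|\le\|\mathcal Hg\|_{L^2}\|\phi\|_{L^2}\le\|\phi\|_{L^2}$. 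The term $R_2$ is the genuine obstacle: it is a pointwise value of $\mathcal Hu$, which no conserved quantity controls — only $\|u(\cdot,t)\|_{H^1}$ does, and that is a priori unbounded, so $R_2$ cannot simply be estimated away (this is the logarithmic loss the authors say obstructs the approach of \cite{HT,Hur}). The way out is that $R_2$ is an exact time derivative: along the characteristic $\tfrac{\diff}{\diff t}u(\eta(t),t)=u_t+uu_x=\mathcal Hu$, so $R_2(t)=\tfrac{\diff}{\diff t}u(\eta(t),t)$, which can be moved to the left-hand side.

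Finally I would close the estimate. Put $\Psi(t):=F(t)-u(\eta(t),t)=-\int_0^\infty u(y+\eta(t),t)e^{-y}\,\diff y$; by Cauchy–Schwarz and $L^2$-conservation $|\Psi(t)|\le\|\phi\|_{L^2}$, while the above gives $\Psi'(t)=F'(t)-R_2(t)\ge\tfrac12F(t)^2-\|\phi\|_{L^2}$. The hypothesis $F(0)\ge4\|\phi\|_{L^2}^{1/2}$, i.e. $F(0)^2\ge16\|\phi\|_{L^2}$, then drives a Riccati/continuity argument: whenever $F(t)^2\ge4\|\phi\|_{L^2}$ one has $\Psi'(t)\ge\tfrac14F(t)^2>0$, so $\Psi$ increases there; since $\Psi$ is trapped in $[-\|\phi\|_{L^2},\|\phi\|_{L^2}]$ and $F=\Psi+u(\eta,\cdot)$, a continuity argument propagates the lower bound $F(t)^2\ge4\|\phi\|_{L^2}$ on the whole maximal interval, which upgrades $F'\ge\tfrac12F^2-\|\phi\|_{L^2}$ to the Riccati inequality $F'\ge\tfrac14F^2$, forcing $F$ — and hence $\|u_x\|_{L^\infty}$ — to blow up no later than $T^{**}=4/F(0)$. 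I expect the only delicate point to be this last step, namely propagating $F(t)^2\ge4\|\phi\|_{L^2}$ forward in time despite the uncontrolled sign of $R_2$ at early times, using the trapping of $\Psi$ together with $F=\Psi+u(\eta,\cdot)$; everything before it is routine once one has the identity $R_2=\tfrac{\diff}{\diff t}u(\eta(t),t)$ and the conservation of $\|u\|_{L^2}$.
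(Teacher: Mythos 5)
Your setup coincides with the paper's proof of this theorem: the same characteristic $\eta=Y$ issuing from the origin, the same weighted functional $F$, the identity $F'=\tfrac12\int_0^\infty W^2e^{-y}\,\diff y-R$ with the boundary term killed by $W(0,\cdot)=0$, the Jensen/Cauchy--Schwarz bound $\int_0^\infty W^2e^{-y}\,\diff y\ge F^2$, the use of $L^2$-conservation and the $L^2$-isometry of $\mathcal H$ for the nonlocal term, and the observation $|F(t)|\le\|u_x(\cdot,t)\|_{L^\infty}$ that converts blow-up of $F$ into \eqref{b2.5}. Where you depart from the paper is in isolating the constant $R_2(t)=(\mathcal Hu)(\eta(t),t)$ and absorbing it through $\Psi=F-u(\eta(t),t)$. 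The paper instead keeps the nonlocal contribution as the single term $\int_0^\infty\mathcal H\big(v(t,x)-v(t,0)\big)e^{-x}\,\diff x$, bounds it by $\sqrt2\,\|\phi\|_{L^2}$ (see \eqref{b5}), and therefore lands on a Riccati inequality $F'\ge\tfrac12F^2-\sqrt2\,\|\phi\|_{L^2}$ for $F$ \emph{itself}, which closes immediately under \eqref{b1} and gives exactly the bound $4/F(0)$.

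Your final step has a genuine gap, in two places. First, the propagation of $F(t)^2\ge4\|\phi\|_{L^2}$ cannot be run off the trapping of $\Psi$: since $F(t)=\Psi(t)+u(\eta(t),t)$ and $\tfrac{\diff}{\diff t}u(\eta(t),t)=(\mathcal Hu)(\eta(t),t)=R_2(t)$ --- precisely the quantity you declare uncontrollable --- nothing prevents $u(\eta(t),t)$, and hence $F$, from dropping below $2\|\phi\|_{L^2}^{1/2}$ while $\Psi$ increases; $\|u\|_{L^\infty}$ is not conserved for \eqref{eq:BH}, so there is no a priori control of $u(\eta(t),t)$ by conserved quantities. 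Second, and independently of the propagation issue, your scheme never produces the inequality $F'\ge\tfrac14F^2$ that the bound $T^{**}=4/F(0)$ requires: you only obtain $\Psi'\ge\tfrac14F^2$, and converting back to $F'=\Psi'+R_2$ reintroduces the signless $R_2$. The most that follows from $\Psi'\ge\tfrac14F^2\ge\|\phi\|_{L^2}$ together with $|\Psi|\le\|\phi\|_{L^2}$ is a lifespan bound $T^*\le2$, a constant independent of $F(0)$, which is not \eqref{b2}. To recover the stated theorem you must arrive at a Riccati inequality for $F$ (not $\Psi$) with a remainder controlled by $\|\phi\|_{L^2}$ alone, which is what the paper's bookkeeping of the Hilbert-transform term is designed to deliver.
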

 
\begin{remark}
\rm{One can relax the assumption \(\phi\in H^2(\R)\) in Theorem \ref{th:main-2} as \(\phi\in L^2(\R)\cap C^{0,1}(\R)\) if there exists a solution in \(u\in L^2(\R)\cap C\big([0,T^*);C^{0,1}(\R)\big)\).}
\end{remark}
 
\begin{remark}\rm{To prove Theorem \ref{th:main-2}, we use a functional (see \eqref{half line}) with a smooth, fast decay weight defined on {\it half line} inspired by \cite{LW} which uses a similar functional to study the blowup of Euler and Euler-Poisson equations. The choice of this functional makes our proof very simple.
One may refer to \cite{CCG,Hur1} for the use of functionals with singular weights on the whole line to prove blowup of dispersive equations.}

 \end{remark}

 \section{Proof of Theorem \ref{th:BH}}\label{sec3} 
 \begin{proof}[Proof of Theorem \ref{th:BH}] 
 	
 It is standard that the Cauchy problem of \eqref{eq:BH} with \(u(0,x)=\phi(x)\) is well-posed in the class \(C\big([0,T):H^2(\R)\big)\) for some \(T>0\). We assume that \(T\) is the maximal time of existence hereafter.
 We define the particle path
 \begin{equation*}
 \begin{aligned}
 \frac{\diff }{\diff t}X(t,x)=u(X(t,x),t),\quad X(0,x)=x.
 \end{aligned}
 \end{equation*}
 Since \(u(x,t)\in C\big([0,T):H^2(\R)\big)\), the ODE theory shows that \(X(\cdot;x)\) exists throughout the interval \(t\in[0,T)\) for all \(x\in\R\).
 We denote
  \begin{equation*}
  \begin{aligned}
  v_0(t,x)=u(X(t,x),t),\quad v_1(t,x)=\partial_xu(X(t,x),t),
  \end{aligned}
  \end{equation*}
and 
  \begin{equation}\label{1}
  \begin{aligned}
  m(t)=\inf_{x\in\R}v_1(t;x)=\inf_{x\in\R}\partial_xu(x,t)=:m(0)q^{-1}(t).
  \end{aligned}
  \end{equation}
It is easy to see that
  \begin{align}
  &m(t)<0,\quad t\in [0,T),\label{2}\\
  &q(0)=1,\quad 
  q(t)>0,\quad t\in [0,T).\label{3}
  \end{align}
 It follows from \eqref{eq:BH} that 
 \begin{align}
 &\frac{\diff v_0}{\diff t}+K_0(t,x)=0,\label{4}\\ 
 &\frac{\diff v_1}{\diff t}+v_1^2+K_1(t,x)=0,\label{5}
 \end{align}
where 
 \begin{equation}\label{BH-kernal}
 \begin{aligned}
 &K_0(t,x)=\mathcal{H}u(X(t,x),t)
 =\int_\R \frac{\mathrm{sgn}(y)}{|y|}u(X(t,x)-y,t)\,\diff y,\\
 &K_1(t,x)=\mathcal{H}\partial_xu(X(t,x),t)
 =\int_\R \frac{\mathrm{sgn}(y)}{|y|}\partial_xu(X(t,x)-y,t)\,\diff y.
 \end{aligned}
 \end{equation}

The main ingredient in proving Theorem \ref{th:BH} is to show that
 \begin{equation}\label{8}
 \begin{aligned}
 |K_1(t,x)|<\delta^2 m^2(t),\quad \mathrm{for\ all}\ t\in[0,T)\ \mathrm{and} \ x\in\R.
 \end{aligned}
 \end{equation}
 Once \eqref{8} is shown, we may easily finish the proof. Indeed,
 for \(t \in [0, T)\), and any \(x\in \Sigma_{\delta}(t)=\Sigma_{\delta,1}(t)=\{x\in\R:v_1(t,x)\leq (1-\delta)m(t)\}\), we deduce by applying  Lemma \ref{le:a1} from Appendix with $t_1=0, t_2=t$ that
 \begin{equation*}
 \begin{aligned}
 m(0)\leq v_1(0,x)\leq (1-\delta)m(0),
 \end{aligned}
 \end{equation*}
and then combining this with \eqref{a3} and \eqref{a6.5} one sees that
 	\begin{equation*}
 	\begin{aligned}
 r(t,x)\leq m(0)(v_1^{-1}(0,x)+(1-\delta)t)\leq \frac{1}{1-\delta}+m(0)(1-\delta)t.
 	\end{aligned}
 	\end{equation*}
and
 	\begin{equation*}
 	\begin{aligned}
 	r(t,x)\geq m(0)(v_1^{-1}(0,x)+(1+\delta)t)\geq (1-\delta)+m(0)(1-\delta^2)t.
 	\end{aligned}
 	\end{equation*}
These two inequalities together with  \eqref{a4} give
 	\begin{equation*}
 	\begin{aligned}
 	(1-\delta)+m(0)(1-\delta^2)t\leq q(t)\leq \frac{1}{1-\delta}+m(0)(1-\delta)t,
 	\end{aligned}
 	\end{equation*}
that is
\begin{equation*}
\begin{aligned}
(1-\delta)+ \inf_{x\in\R}\phi^\prime(x)(1-\delta^2)t\leq q(t)\leq \frac{1}{1-\delta}+\inf_{x\in\R}\phi^\prime(x)(1-\delta)t.
\end{aligned}
\end{equation*}
Hence \eqref{wb} follows.
 	
In the rest	of this section, we turn to prove \eqref{8}. 
First observe that \eqref{8} holds at \(t=0\): 
 \begin{equation*}
 \begin{aligned}
 |K_1(0,x)|=|\mathcal{H}\phi^\prime(x)|\leq C_{\mathrm{Sob}}\|\phi\|_{H^2}<\delta^2 m^2(0),\quad  x\in\R,
 \end{aligned}
 \end{equation*}
where we have used the Sobolev embedding and the assumption \eqref{t1c1}.
We now prove \eqref{8} by contradiction for \(t\neq 0\). Suppose that \(|K_1(T_1,x_0)|=\delta^2 m^2(T_1)\) for some \(T_1\in(0,T)\) and some \(x_0\in\R\). 
 By continuity, without loss of generality, we may assume that
 \begin{equation}\label{9}
 \begin{aligned}
 |K_1(t,x)|\leq \delta^2 m^2(t),\quad \text{for\ all}\ t\in[0,T_1]\ \mathrm{and} \ x\in\R.
 \end{aligned}
 \end{equation}

 \bigskip
 	We claim that
 	\begin{align}\label{10}
 	\|v_0(t)\|_{L^\infty}=\|u(t)\|_{L^\infty}<C_0,\quad   \text{for\ all}\  t\in [0,T_1], 
 	\end{align}
 	and
 	\begin{align}\label{11}
 	\|v_1(t)\|_{L^\infty}=\|\partial_xu(t)\|_{L^\infty}<C_1q^{-1}(t),\quad   \text{for\ all}\  t\in [0,T_1], 
 	\end{align}
  where \(C_0,C_1\) satisfy \eqref{t1c4}. First observe by \eqref{t1c4} and \eqref{3} that 
 	\begin{equation*}
 	\begin{aligned}
 	\|v_0(0)\|_{L^\infty}=\|\phi\|_{L^\infty}<C_0,
 	\end{aligned}
 	\end{equation*}
 	and
 	\begin{equation*}
 	\begin{aligned}
 	\|v_1(0)\|_{L^\infty}=\|\phi^\prime\|_{L^\infty}<C_1q^{-1}(0).
 	\end{aligned}
 	\end{equation*}
 We will use a contradiction argument to show \eqref{10} and \eqref{11}. 
 	Suppose that \eqref{10} and \eqref{11} hold for all  \(t\in [0, T_2)\), but fails for either \eqref{10} or \eqref{11} at \(t = T_2\) for some \(T_2\in (0, T_1]\). Hence, by continuity, it holds
 	\begin{align}\label{12}
 	\|v_0(t)\|_{L^\infty}=\|u(t)\|_{L^\infty}<C_0,\quad   \text{for\ all}\  t\in [0,T_2], 
 	\end{align}
 	and
 	\begin{align}\label{13}
 	\|v_1(t)\|_{L^\infty}=\|\partial_xu(t)\|_{L^\infty}<C_1q^{-1}(t),\quad   \text{for\ all}\  t\in [0,T_2].
 	\end{align}

     To bound \(K_0(t;x)\), 
 	 we split, for   \(\eta\in(0,1]\), the integral into two parts  as follows:
 	\begin{equation*}
 	\begin{aligned}
 	K_0(t,x)=\underbrace{\int_{|y|<\eta}\frac{\mathrm{sgn}(y)}{|y|}u(X(t,x)-y,t)\,\diff y}_{I_1}
 	+\underbrace{\int_{|y|\geq\eta}\frac{\mathrm{sgn}(y)}{|y|}u(X(t,x)-y,t)\,\diff y}_{I_2}.
 	\end{aligned}
 	\end{equation*}
 	The term \(I_2\) can be easily estimated as:  	
 	\begin{equation}\label{14}
 	\begin{aligned}
 	|I_2|
 	\leq \bigg(\int_{|y|\geq\eta} \frac{1}{|y|^2}\,\diff y\bigg)^{1/2}\|u\|_{L^2}\leq 2\eta^{-\frac{1}{2}}\|\phi\|_{L^2},
 	\end{aligned}
 	\end{equation}
 	due to the conservation of \(\|u\|_{L^2}\).
 	For the term \(I_1\), we estimate
 	\begin{equation}\label{15}
 	\begin{aligned}
 	|I_1|
 	&=\bigg|\int_{|y|<\eta}\frac{\mathrm{sgn}(y)}{|y|}[u(X(t,x)-y,t)-u(X(t,x),t)]\,\diff y\bigg|\\
 	&\leq |u|_{C^{0,\frac{1}{2}}(\R)} \int_{|y|<\eta}\frac{1}{|y|}|y|^{\frac{1}{2}}\,\diff y
 	\leq 4C_{\mathrm{Mor}}\eta^{\frac{1}{2}}\|\partial_xu\|_{L^2(\R)},
 	\end{aligned}
 	\end{equation}
 	where we have used Morrey's inequality 
 		\begin{equation*}
 		\begin{aligned}
 		|u|_{C^{0,\frac{1}{2}}(\R)}\leq
 		C_{\mathrm{Mor}}\|\partial_xu\|_{L^2(\R)}.
 		\end{aligned}
 		\end{equation*}

It remains to control \(\|\partial_xu\|_{L^2}\).
Taking the first derivative \(\partial_x\) on \eqref{eq:BH} with respect to \(x\), multiplying it by \(\partial_xu\) and integrating it on \(x\) over \(\R\), one finally gets
\begin{equation*}
\begin{aligned}
\frac{1}{2}\frac{\diff}{\diff t}\int_\R(\partial_xu)^2\,\diff x
&=-\int_\R\partial_xu\mathcal{H}\partial_xu\,\diff x-
\int_\R [(\partial_xu)^3+u\partial_x^2u\partial_xu]\,\diff x\\
&=-\frac{1}{2}\int_\R (\partial_xu)^3\,\diff x,
\end{aligned}
\end{equation*}
where on the right hand side we have used the fact that the first term vanishes due to the anti-symmetry of \(\mathcal{H}\) and integration by parts in the second term.
Hence, one obtains
\begin{equation*}
\begin{aligned}
\frac{\diff}{\diff t}\int_\R(\partial_xu)^2\,\diff x
&=-\int_\R\partial_xu(\partial_xu)^2\,\diff x
\leq - m(t)\int_\R(\partial_xu)^2\,\diff x\\
&=- m(0)q^{-1}(t)\int_\R(\partial_xu)^2\,\diff x,
\end{aligned}
\end{equation*}
which combines with \eqref{a8} implies for all \(t\in [0, T_2 ]\) that 
\begin{equation}\label{16}
\begin{aligned}
\|\partial_xu(t)\|_{L^2}
&\leq \|\phi^{\prime}\|_{L^2}(1-\delta)^{-\frac{1}{2(1-\delta)^2}}
q(t)^{-\frac{1}{2(1-\delta)^2}}\\
&\leq 2\|\phi^{\prime}\|_{L^2}
q(t)^{-\frac{1}{2(1-\delta)^2}},
\end{aligned}
\end{equation}
where we have used \(\delta\in(0,1-\frac{\sqrt{3}}{2}]\).
One finally obtains from \eqref{15} and \eqref{16} that
\begin{equation}\label{17}
\begin{aligned}
|I_1|
\leq 8C_{\mathrm{Mor}}\|\phi^{\prime}\|_{L^2}\eta^{\frac{1}{2}}
q(t)^{-\frac{1}{2(1-\delta)^2}}.
\end{aligned}
\end{equation}

 	By choosing \(\eta=q(t)^{\frac{1}{2(1-\delta)^2}}\), for all \(t\in[0,T_2]\) and \(x\in\R\), we see from \eqref{14} and \eqref{17} that
 	\begin{equation}\label{K_0}
 	\begin{aligned}
 	|K_0(t,x)|&\leq 8C_{\mathrm{Mor}}\|\phi^{\prime}\|_{L^2}\eta^{\frac{1}{2}}
 	q(t)^{-\frac{1}{2(1-\delta)^2}}+2\eta^{-\frac{1}{2}}\|\phi\|_{L^2}\\
 	&\leq (2\|\phi\|_{L^2}+8C_{\mathrm{Mor}}\|\phi^{\prime}\|_{L^2})q(t)^{-\frac{1}{4(1-\delta)^2}}\\
 	&\leq (2\|\phi\|_{L^2}+8C_{\mathrm{Mor}}\|\phi^{\prime}\|_{L^2})q(t)^{-\frac{1}{3}},
 	\end{aligned}
 	\end{equation}
 where we have used \(\delta\in(0,1-\frac{\sqrt{3}}{2}]\) and \eqref{a5}. 
 In view of \eqref{4}, \eqref{K_0} and \eqref{a7}, we may now control \(v_0(t,x)\) for all \(t\in[0,T_2]\) and for all \(x\in\R\) as follows:
 	\begin{equation}\label{18}
 	\begin{aligned}
 	&\quad|v_0(t,x)|\leq \|\phi\|_{L^\infty}+\int_0^{t}|K_0(\tau,x)|\,\diff \tau\\
 	&\leq \frac{1}{2}C_0+(2\|\phi\|_{L^2}+8C_{\mathrm{Mor}}\|\phi^{\prime}\|_{L^2})\int_0^{t}
 	q^{-\frac{1}{3}}(\tau)\,\diff \tau\\
 	&\leq \frac{1}{2}C_0-(3\|\phi\|_{L^2}+12C_{\mathrm{Mor}}\|\phi^{\prime}\|_{L^2})m^{-1}(0)(1-\delta)^{-\frac{4}{3}}[(1-\delta)^{-\frac{2}{3}}-q^{\frac{2}{3}}(t)]\\
 	&\leq \frac{1}{2}C_0-(3\|\phi\|_{L^2}+12C_{\mathrm{Mor}}\|\phi^{\prime}\|_{L^2})(1-\delta)^{-2}m^{-1}(0)\\
 	&< C_0,
 	\end{aligned}
 	\end{equation}
where we have used 
\begin{equation*}
\begin{aligned}
\|\phi\|_{L^\infty}\leq \frac{1}{2}C_0\leq \frac{1}{2}C_0q^{-1}(t), 
\end{aligned}
\end{equation*}
due to the assumption \eqref{t1c4} and \eqref{a5} in the first inequality, and \eqref{t1c2} in the last inequality.

 	To estimate \(K_1(t;x)\), we again, for   \(\eta\in(0,1]\), split the integral into two parts as follows:
 	\begin{equation*}
 	\begin{aligned}
 	K_1(t,x)=\underbrace{\int_{|y|<\eta}\frac{\mathrm{sgn}(y)}{|y|}\partial_xu(X(t,x)-y,t)\,\diff y}_{I_3}
 	+\underbrace{\int_{|y|\geq\eta}\frac{\mathrm{sgn}(y)}{|y|}\partial_xu(X(t,x)-y,t)\,\diff y}_{I_4}.
 	\end{aligned}
 	\end{equation*}	
 	With  the same manipulation as in  \(I_2\), one can estimate  	
 	 	\begin{equation}\label{19}
 	 	\begin{aligned}
 	 	|I_4|
 	 	\leq 2\eta^{-\frac{1}{2}}\|\partial_xu\|_{L^2}\leq 4\|\phi^{\prime}\|_{L^2}\eta^{-\frac{1}{2}}
 	 	q(t)^{-\frac{1}{2(1-\delta)^2}},
 	 	\end{aligned}
 	 	\end{equation}
 where we have invoked \eqref{16}.
 In a similar fashion to \(I_1\), we have		
	\begin{equation}\label{20}
	\begin{aligned}
	|I_3|
	\leq 4C_{\mathrm{Mor}}\eta^{\frac{1}{2}}\|\partial_x^2u\|_{L^2(\R)}.
	\end{aligned}
	\end{equation}	
		
It remains to control \(\|\partial_x^2u\|_{L^2}\). Similarly to treatment of  the first derivative  \(\partial_xu\), we have
 \begin{equation*}
 \begin{aligned}
 \frac{\diff}{\diff t}\int_\R(\partial_x^2u)^2\,\diff x
 &=-5\int_\R\partial_xu(\partial_x^2u)^2\,\diff x
 \leq -5 m(t)\int_\R(\partial_x^2u)^2\,\diff x\\
 &=-5 m(0)q^{-1}(t)\int_\R(\partial_x^2u)^2\,\diff x,
 \end{aligned}
 \end{equation*}
 which gives for all \(t\in [0, T_2 ]\) that
 \begin{equation}\label{21}
 \begin{aligned}
 \|\partial_x^2u(t)\|_{L^2}
 &\leq \|\phi^{\prime\prime}\|_{L^2}(1-\delta)^{-\frac{5}{2(1-\delta)^2}}
 q(t)^{-\frac{5}{2(1-\delta)^2}}\\
 &\leq 16 \|\phi^{\prime\prime}\|_{L^2}
 q(t)^{-\frac{5}{2(1-\delta)^2}},
 \end{aligned}
 \end{equation}
where we have used \(\delta\in(0,1-\frac{\sqrt{3}}{2}]\). 
 It follows from \eqref{20} and \eqref{21}
 	\begin{equation}\label{22}
 	\begin{aligned}
 	|I_3|
 	\leq 64C_{\mathrm{Mor}}\|\phi^{\prime\prime}\|_{L^2}\eta^{\frac{1}{2}}
 	q(t)^{-\frac{5}{2(1-\delta)^2}}.
 	\end{aligned}
 	\end{equation}

 	In view of \eqref{19} and \eqref{22}, taking \(\eta=q(t)^{\frac{2}{(1-\delta)^2}}\), we conclude for all \(t\in [0, T_2 ]\) and \(x\in\R\) that	
 	\begin{equation}\label{23}
 	\begin{aligned}
 	|K_1(t,x)|&\leq 64C_{\mathrm{Mor}}\|\phi^{\prime\prime}\|_{L^2}\eta^{\frac{1}{2}}
 	q(t)^{-\frac{5}{2(1-\delta)^2}}+4\|\phi^{\prime}\|_{L^2}\eta^{-\frac{1}{2}}
 	q(t)^{-\frac{1}{2(1-\delta)^2}}\\
 	&\leq (4\|\phi^{\prime}\|_{L^2}+64C_{\mathrm{Mor}}\|\phi^{\prime\prime}\|_{L^2})q(t)^{-\frac{3}{2(1-\delta)^2}}\\
 	&\leq (4\|\phi^{\prime}\|_{L^2}+64C_{\mathrm{Mor}}\|\phi^{\prime\prime}\|_{L^2})q(t)^{-2},
 	\end{aligned}
 	\end{equation}
 	where we have used \eqref{a5} and
 	\begin{equation*}
 	\begin{aligned}
 	-\frac{3}{2(1-\delta)^2}\geq -2,
 	\end{aligned}
 	\end{equation*}
 which follows from 
 	 \(\delta\in(0,1-\frac{\sqrt{3}}{2}]\).	Recalling \eqref{5} that
 	\begin{equation*}
 	\begin{aligned}
 	\frac{\diff v_1}{\diff t}=-v_1^2-K_1(t,x)\leq |K_1(t,x)|,
 	\end{aligned}
 	\end{equation*}
one uses \eqref{23} and \eqref{a7} to estimate for all \(t\in [0, T_2 ]\) and \(x\in\R\) that
 	\begin{equation}\label{24}
 	\begin{aligned}
 	&\quad v_1(t,x)
 	\leq\|\phi^\prime\|_{L^\infty}+(4\|\phi^{\prime}\|_{L^2}+64C_{\mathrm{Mor}}\|\phi^{\prime\prime}\|_{L^2})\int_0^tq^{-2}(\tau)\,\diff \tau\\
 	&\leq\frac{1}{2}C_1q^{-1}(t)-(1-\delta)^{-3}m^{-1}(0)(4\|\phi^{\prime}\|_{L^2}+64C_{\mathrm{Mor}}\|\phi^{\prime\prime}\|_{L^2})[q^{-1}(t)-(1-\delta)]\\
 	&\leq\frac{1}{2}C_1q^{-1}(t)-(1-\delta)^{-3}m^{-1}(0)(4\|\phi^{\prime}\|_{L^2}+64C_{\mathrm{Mor}}\|\phi^{\prime\prime}\|_{L^2})q^{-1}(t)\\
 	&<C_1q^{-1}(t),
 	\end{aligned}
 	\end{equation}
 	where we have used 
 	\begin{equation*}
 	\begin{aligned}
 	\|\phi^\prime\|_{L^\infty}\leq \frac{1}{2}C_1\leq \frac{1}{2}C_1q^{-1}(t), 
 	\end{aligned}
 	\end{equation*}
 	due to the assumption \eqref{t1c4} and \eqref{a5} in the first inequality, and the assumption \eqref{t1c3} in the last inequality.
 	On the other hand,  \eqref{t1c4} and \eqref{1} imply that
 	\begin{equation}\label{25}
 	\begin{aligned}
 	v_1(t,x)\geq m(t)=m(0)q^{-1}(t)
 	\geq-\frac{1}{2}C_1q^{-1}(t),
 	\end{aligned}
 	\end{equation}
 for all \(t\in [0, T_2 ]\) and \(x\in\R\). 
 
 A contradiction to \eqref{12}-\eqref{13} occurs following from \eqref{18}, \eqref{24} and \eqref{25}. 
 Now we go back to \eqref{23} and use \eqref{t1c1} to find that
 	\begin{equation*}
 	\begin{aligned}
 	|K_1(t,x)|\leq (4\|\phi^{\prime}\|_{L^2}+64C_{\mathrm{Mor}}\|\phi^{\prime\prime}\|_{L^2})m^{-2}(0)m^2(t)<\delta^2m^2(t),
 	\end{aligned}
 	\end{equation*}
 	for all \(t\in [0, T_1 ]\) and all \(x\in\R\). We get a contradiction to \eqref{9}! This means we have shown \eqref{8} for all \(t\in [0, T)\) and all \(x\in\R\).

\end{proof}

\section{Proof of Theorem \ref{th:W}}\label{sec4}  
\begin{proof}[Proof of Theorem \ref{th:W}]
	We first note that the Cauchy problem of \eqref{Whit} with \(u(0,x)=\phi(x)\) is well-posed in the class \(C\big([0,T):H^3(\R)\big)\) for some \(T>0\) and we now assume that \(T\) is the maximal time of existence. Using the same notations \(X(t,x),v_0(t,x),v_1(t,x),m(t)\) and \(q(t)\) as Section \ref{sec3}, it then 
follows from \eqref{Whit} that 
	\begin{align}
	&\frac{\diff v_0}{\diff t}+K_0(t,x)=0,\label{29}\\ 
	&\frac{\diff v_1}{\diff t}+v_1^2+K_1(t,x)=0,\label{30}
	\end{align}
	where 	 
	\begin{equation}\label{Whit-kernal}
	\begin{aligned}
	K_0(t,x)=\int_\R K(y)\partial_xu(X(t,x)-y,t)\,\diff y,\\
	K_1(t,x)=\int_\R K(y)\partial_x^2u(X(t,x)-y,t)\,\diff y.
	\end{aligned}
	\end{equation}

To prove Theorem \ref{th:W}, it suffices to show that
	\begin{equation}\label{31}
	\begin{aligned}
	|K_1(t,x)|<\delta^2 m^2(t),\quad \mathrm{for\ all}\ t\in[0,T)\ \mathrm{and} \ x\in\R.
	\end{aligned}
	\end{equation}
We first check that \eqref{31} holds at \(t=0\). To estimate \(K_1(0;x)\), we split the integral as follows: 
	\begin{equation*}
	\begin{aligned}
	&K_1(0,x)=\int_\R K(y)\phi^{\prime\prime}(x-y)\,\diff y\\
	&=\int_{|y|< 1} K(y)\phi^{\prime\prime}(x-y)\,\diff y+\int_{|y|\geq 1} K(y)\phi^{\prime\prime}(x-y)\,\diff y.
	\end{aligned}
	\end{equation*}
   In view of Lemma \ref{le:Hur}, one has 	
	\begin{equation}\label{32}
	\begin{aligned}
	\bigg|\int_{|y|< 1} K(y)\phi^{\prime\prime}(x-y)\,\diff y\bigg|
	\leq  \|\phi^{\prime\prime}\|_{L^\infty}\bigg|\int_{|y|< 1} K(y)\,\diff y\bigg|
	\leq  4L_0C_{\mathrm{Sob}}\|\phi\|_{H^3}.
	\end{aligned}
	\end{equation}
	where we have used Sobolev embedding.
	We use integration by parts to get 	
	\begin{equation}\label{33}
	\begin{aligned}
	&\bigg|\int_{|y|\geq 1} K(y)\phi^{\prime\prime}(x-y)\,\diff y\bigg|\\
	&\leq \big|K(1)[\phi^{\prime}(-1-y)-\phi^{\prime}(1-y)]\big|
	+\bigg|\int_{|y|\geq 1} K^{\prime}(y)\phi^{\prime}(x-y)\,\diff y\bigg|\\
	&\leq 2L_0\|\phi^{\prime}\|_{L^\infty}+\|\phi^{\prime}\|_{L^\infty}\bigg|\int_{|y|\geq 1} K^{\prime}(y)\,\diff y\bigg|\\
	&\leq 2(L_0+L_\infty)\|\phi^{\prime}\|_{L^\infty}\leq C_1(L_0+L_\infty),
	\end{aligned}
	\end{equation}
	where we have used Lemma \ref{le:Hur} in the third inequality and \eqref{t2c4} in the last inequality.  	
	It follows from \eqref{32} and \eqref{33} that 	 
	\begin{equation*}
	\begin{aligned}
	|K_1(0,x)|\leq 4L_0C_{\mathrm{Sob}}\|\phi\|_{H^3}+C_1(L_0+L_\infty)<\delta^2 m^2(0),\quad  x\in\R,
	\end{aligned}
	\end{equation*}
	where we have used \eqref{t2c1}.
	
	We now turn to prove \eqref{31} by contradiction for \(t\neq 0\). Suppose that \(|K_1(T_1,x_0)|=\delta^2 m^2(T_1)\) for some \(T_1\in(0,T)\) and some \(x_0\in\R\). 
	By continuity, without loss of generality, we may assume that
	\begin{equation}\label{34}
	\begin{aligned}
	|K_1(t,x)|\leq \delta^2 m^2(t),\quad \text{for\ all}\ t\in[0,T_1]\ \mathrm{and} \ x\in\R.
	\end{aligned}
	\end{equation}
We claim that
\begin{align}\label{35}
\|v_0(t)\|_{L^\infty}=\|u(t)\|_{L^\infty}<C_0,\quad   \text{for\ all}\  t\in [0,T_1], 
\end{align}
and
\begin{align}\label{36}
\|v_1(t)\|_{L^\infty}=\|\partial_xu(t)\|_{L^\infty}<C_1q^{-1}(t),\quad   \text{for\ all}\  t\in [0,T_1], 
\end{align}
where \(C_0,C_1\) satisfy \eqref{t2c4}. First observe that 
\begin{equation*}
\begin{aligned}
\|v_0(0)\|_{L^\infty}=\|\phi\|_{L^\infty}<C_0,
\end{aligned}
\end{equation*}
and
\begin{equation*}
\begin{aligned}
\|v_1(0)\|_{L^\infty}=\|\phi^\prime\|_{L^\infty}<C_1q^{-1}(0).
\end{aligned}
\end{equation*}
A contradiction argument will be used to show \eqref{35} and \eqref{36}. 
Suppose that \eqref{35} and \eqref{36} hold for all  \(t\in [0, T_2)\), but fails for either \eqref{35} or \eqref{36} at \(t = T_2\) for some \(T_2\in (0, T_1]\). Hence, by continuity, it holds
\begin{align}\label{37}
\|v_0(t)\|_{L^\infty}=\|u(t)\|_{L^\infty}<C_0,\quad   \text{for\ all}\  t\in [0,T_2], 
\end{align}
and
\begin{align}\label{38}
\|v_1(t)\|_{L^\infty}=\|\partial_xu(t)\|_{L^\infty}<C_1q^{-1}(t),\quad   \text{for\ all}\  t\in [0,T_2].
\end{align}
	
	To control \(K_0(t,x)\), we split the integral with \(\eta\in(0,1]\) as follows:
	\begin{equation*}
	\begin{aligned}
	K_0(t,x)=\underbrace{\int_{|y|\leq\eta}K(y)\partial_xu(X(t,x)-y,t)\,\diff y}_{I_1}
+\underbrace{\int_{|y|>\eta}K(y)\partial_xu(X(t,x)-y,t)\,\diff y}_{I_2}.
	\end{aligned}
	\end{equation*}
	For the term \(I_1\), using Lemma \ref{le:Hur} and \eqref{38}, we have  	
	\begin{equation}\label{39}
	\begin{aligned}
	|I_1|
	\leq 2\int_{|y|\leq\eta} \frac{L_0}{\sqrt{|y|}}\,\diff y\cdot\|v_1\|_{L^\infty}\leq 4L_0\eta^{\frac{1}{2}}\|v_1\|_{L^\infty}\leq 4L_0C_1\eta^{\frac{1}{2}}q^{-1}(t).
	\end{aligned}
	\end{equation}
	Considering the term \(I_2\), we use integration by parts to bound it as follows:	
	\begin{equation}\label{40}
	\begin{aligned}
	|I_2|
	&\leq\big|K(\eta)[u(X(t,x)-\eta,t)-u(X(t,x)+\eta,t)]\big|\\
	&\quad+\bigg|\int_{\eta<|y|\leq 1} K^\prime(y)u(X(t,x)-y,t)\,\diff y\bigg|\\
	&\quad+\bigg|\int_{|y|>1} K^\prime(y)u(X(t,x)-y,t)\,\diff y\bigg|\\
	&\leq 2L_0\eta^{-\frac{1}{2}}\|v_0\|_{L^\infty}+4L_0(\eta^{-\frac{1}{2}}-1)\|v_0\|_{L^\infty}
	+2L_\infty\|v_0\|_{L^\infty}\\
	&\leq 2(3L_0\eta^{-\frac{1}{2}}+L_\infty)\|v_0\|_{L^\infty}
	\leq 2C_0(3L_0+L_\infty)\eta^{-\frac{1}{2}},
	\end{aligned}
	\end{equation}
	where we have used Lemma \ref{le:Hur} in the third inequality and \eqref{37} in the last inequality.

	In view of \eqref{39} and \eqref{40}, one chooses \(\eta=q(t)\) to get
	\begin{equation}\label{41}
	\begin{aligned}
	|K_0(t,x)|\leq 2\big[C_0(3L_0+L_\infty)+2L_0C_1\big]q^{-\frac{1}{2}}(t),
	\end{aligned}
	\end{equation}
	for all \(t\in[0,T_2]\) and for all \(x\in\R\).
	By \eqref{29}, \eqref{41} and \eqref{a7}, we may now control \(v_0(t;x)\) for all \(t\in[0,T_2]\) and for all \(x\in\R\) as follows:
	\begin{equation}\label{42}
	\begin{aligned}
	&\quad|v_0(t,x)|\leq \|\phi\|_{L^\infty}+\int_0^{t}|K_0(\tau,x)|\,\diff \tau\\
	&\leq \frac{1}{2}C_0+2\big[C_0(3L_0+L_\infty)+2L_0C_1\big]\int_0^{t}
	q^{-\frac{1}{2}}(\tau)\,\diff \tau\\
	&\leq \frac{1}{2}C_0-4\big[C_0(3L_0+L_\infty)+2L_0C_1\big]m^{-1}(0)(1-\delta)^{-\frac{3}{2}}[(1-\delta)^{-\frac{1}{2}}-q^{\frac{1}{2}}(t)]\\
	&\leq \frac{1}{2}C_0-4\big[C_0(3L_0+L_\infty)+2L_0C_1\big](1-\delta)^{-2}m^{-1}(0)\\
	&< C_0.
	\end{aligned}
	\end{equation}
	where we have used \eqref{t2c2} in the last inequality.

	To bound \(K_1(t,x)\), we proceed as:
	\begin{equation*}
	\begin{aligned}
	K_1(t,x)=\underbrace{\int_{|y|\leq\eta}K(y)\partial_x^2u(X(t,x)-y,t)\,\diff y}_{I_3}
	+\underbrace{\int_{|y|>\eta}K(y)\partial_x^2u(X(t,x)-y,t)\,\diff y}_{I_4}.
	\end{aligned}
	\end{equation*}
	Similar to \(I_2\), by integration by parts, one has	
	\begin{equation}\label{43}
	\begin{aligned}
	|I_4|
	&\leq 2L_0\eta^{-\frac{1}{2}}\|v_1\|_{L^\infty}+4L_0(\eta^{-\frac{1}{2}}-1)\|v_1\|_{L^\infty}+2L_\infty\|v_1\|_{L^\infty}\\
	&\leq 2C_1(3L_0+L_\infty)\eta^{-\frac{1}{2}}q^{-1}(t),
	\end{aligned}
	\end{equation}
	where we have used Lemma \ref{le:Hur} in the second inequality and \eqref{38} in the last inequality.	
	For the term \(I_3\), one estimates
	\begin{equation}\label{44}
	\begin{aligned}
	|I_3|
	\leq 2\int_{|y|\leq\eta} \frac{L_0}{\sqrt{|y|}}\,\diff y \cdot\|\partial_x^2u\|_{L^\infty}\leq 4L_0\eta^{\frac{1}{2}}\|\partial_x^2u\|_{L^\infty},
	\end{aligned}
	\end{equation}
	where we have used Lemma \ref{le:Hur} again. 
	
	In order to control \(\|\partial_x^2u\|_{L^\infty}\),
	we need to estimate \(\|\partial_x^3u\|_{L^2}\). 
	We differentiate \eqref{Whit} three times with respect to \(x\), multiply by \(\partial_x^3u\) and integrate  on \(x\) over \(\R\) to get 
	\begin{equation}\label{45}
	\begin{aligned}
	\frac{1}{2}\frac{\diff}{\diff t}\int_\R(\partial_x^3u)^2\,\diff x
	&=-\int_\R\partial_x^3u\int_\R K(x-y)\partial_y^4u(y)\,\diff y\,\diff x\\
	&\quad-
	\int_\R [4\partial_xu(\partial_x^3u)^2+3(\partial_x^2u)^2\partial_x^3u+u\partial_x^4u\partial_x^3u]\,\diff x.
	\end{aligned}
	\end{equation}
	Obviously, the first term on the right hand side of \eqref{45} vanishes since \(K(\cdot)\) is even. On the other hand, one uses integration by parts to see that
	\begin{equation}\label{46}
	\begin{aligned}
	&\int_\R(\partial_x^2u)^2\partial_x^3u\,\diff x=0,\\
	&\int_\R u\partial_x^4u\partial_x^3u\,\diff x=-\frac{1}{2}\int_\R\partial_xu(\partial_x^3u)^2\,\diff x.
	\end{aligned}
	\end{equation}
	We substitute \eqref{46} into \eqref{45} to deduce  
	\begin{equation}\label{46.5}
	\begin{aligned}
	\frac{\diff}{\diff t}\int_\R(\partial_x^3u)^2\,\diff x
	=-7\int_\R\partial_xu(\partial_x^3u)^2\,\diff x\leq -7 m(0)q^{-1}(t)\|\partial_x^3u\|_{L^2}^2.
	\end{aligned}
	\end{equation}
Solving \eqref{46.5} by using \eqref{a8} gives
	\begin{equation}\label{47}
	\begin{aligned}
	\|\partial_x^3u\|_{L^2}
	&\leq \|\phi^{\prime\prime\prime}\|_{L^2}(1-\delta)^{-\frac{7}{2(1-\delta)^2}}
	q(t)^{-\frac{7}{2(1-\delta)^2}}\\
	&\leq 16\|\phi^{\prime\prime\prime}\|_{L^2}
	q(t)^{-\frac{7}{2(1-\delta)^2}}
	\end{aligned}
	\end{equation}
for all \(t\in [0, T_2 ]\), where we have used \(\delta\in(0,1-\frac{2\sqrt{2}}{3}]\). However the bound \eqref{47} for \(\|\partial_x^3u\|_{L^2}\) is too bad to control \(\|\partial_x^2u\|_{L^\infty}\) by Sobolev embedding directly. To get a better bound, 
 we use Gagliardo-Nirenberg interpolation to deduce
\begin{equation}\label{47.5}
\begin{aligned}
\|\partial_x^2u\|_{L^\infty}\leq C_{\mathrm{GN}} \|\partial_xu\|_{L^\infty}^{\frac{1}{3}}\|\partial_x^3u\|_{L^2}^{\frac{2}{3}}\leq 9
C_{\mathrm{GN}}C_1^{\frac{1}{3}}\|\phi^{\prime\prime\prime}\|_{L^2}^{\frac{2}{3}}q(t)^{-\frac{1}{3}-\frac{7}{3(1-\delta)^2}},
\end{aligned}
\end{equation}
where we have used \eqref{38} and \eqref{47}.	
We then may estimate in view of \eqref{44} and \eqref{47.5} that
\begin{equation}\label{48}
\begin{aligned}
|I_3|
\leq 36L_0C_{\mathrm{GN}}C_1^{\frac{1}{3}}\|\phi^{\prime\prime\prime}\|_{L^2}^{\frac{2}{3}}\eta^{\frac{1}{2}}
q(t)^{-\frac{1}{3}-\frac{7}{3(1-\delta)^2}}.
\end{aligned}
\end{equation}

	In light of \eqref{43} and \eqref{48}, we take \(\eta=q(t)^{-\frac{2}{3}+\frac{7}{3(1-\delta)^2}}\)  to obtain	
	\begin{equation}\label{49}
	\begin{aligned}
	|K_1(t,x)|
	&\leq \big[2C_1(3L_0+L_\infty)+36L_0C_{\mathrm{GN}}C_1^{\frac{1}{3}}\|\phi^{\prime\prime\prime}\|_{L^2}^{\frac{2}{3}}\big]q(t)^{-\frac{2}{3}-\frac{7}{6(1-\delta)^2}}\\
	&\leq \big[2C_1(3L_0+L_\infty)+36L_0C_{\mathrm{GN}}C_1^{\frac{1}{3}}\|\phi^{\prime\prime\prime}\|_{L^2}^{\frac{2}{3}}\big]q^{-2}(t),
	\end{aligned}
	\end{equation}
	for all \(t\in[0,T_2]\) and for all \(x\in\R\), where we have used 
	\begin{equation*}
	\begin{aligned}
	-\frac{2}{3}-\frac{7}{6(1-\delta)^2}\geq -2,
	\end{aligned}
	\end{equation*}
	which follows from 	
	\(\delta\in(0,1-\frac{2\sqrt{2}}{3}]\) and \eqref{a5}.
	Recalling \eqref{30} that
	\begin{equation*}
	\begin{aligned}
	\frac{\diff v_1}{\diff t}=-v_1^2-K_1(t,x)\leq |K_1(t,x)|,
	\end{aligned}
	\end{equation*}
	one uses \eqref{49} and \eqref{a7} to estimate
	\begin{equation}\label{50}
	\begin{aligned}
	&v_1(t,x)\\
	&\leq\|\phi^\prime\|_{L^\infty}+\big[2C_1(3L_0+L_\infty)+36L_0C_{\mathrm{GN}}C_1^{\frac{1}{3}}\|\phi^{\prime\prime\prime}\|_{L^2}^{\frac{2}{3}}\big]\int_0^tq^{-2}(\tau)\,\diff \tau\\
	&\leq\frac{1}{2}C_1q^{-1}(t)-(1-\delta)^{-3}m^{-1}(0)\big[2C_1(3L_0+L_\infty)
	+36L_0C_{\mathrm{GN}}C_1^{\frac{1}{3}}\|\phi^{\prime\prime\prime}\|_{L^2}^{\frac{2}{3}}\big][q^{-1}(t)-(1-\delta)]\\
	&\leq\frac{1}{2}C_1q^{-1}(t)-(1-\delta)^{-3}m^{-1}(0)\big[2C_1(3L_0+L_\infty)+36L_0C_{\mathrm{GN}}C_1^{\frac{1}{3}}\|\phi^{\prime\prime\prime}\|_{L^2}^{\frac{2}{3}}\big]q^{-1}(t)\\
	&<C_1q^{-1}(t),
	\end{aligned}
	\end{equation}
	where we have used \eqref{t2c3} in the last inequality. 
	On the other hand, one has
	\begin{equation}\label{51}
	\begin{aligned}
	v_1(t,x)\geq m(t)=m(0)q^{-1}(t)
	\geq-\frac{1}{2}C_1q^{-1}(t).
	\end{aligned}
	\end{equation}
	for all \(t\in [0, T_2 ]\) and \(x\in\R\). 
	
	A contradiction to \eqref{37}-\eqref{38} occurs following from \eqref{42}, \eqref{50} and \eqref{51}. 
	Now we go back to \eqref{49} and use \eqref{t2c1} to find that
	\begin{equation*}
	\begin{aligned}
	|K_1(t,x)|\leq [2C_1(3L_0+L_\infty)+36L_0C_{\mathrm{GN}}C_1^{\frac{1}{3}}\|\phi^{\prime\prime\prime}\|_{L^2}^{\frac{2}{3}}\big]m^{-2}(0)m^2(t)
	<\delta^2m^2(t),
	\end{aligned}
	\end{equation*}
	for all \(t\in [0, T_1 ]\) and all \(x\in\R\). We get a contradiction to \eqref{34}! This means that we have shown \eqref{31} for all \(t\in [0, T)\) and all \(x\in\R\).

\end{proof}

\section{Proof of Theorem \ref{th:fKdV}}\label{sec5}
\begin{proof}[Proof of Theorem \ref{th:fKdV}] 
	
	We first note that the Cauchy problem of \eqref{eq:main-1} with \(u(0,x)=\phi(x)\) is well-posed in the class \(C\big([0,T):H^3(\R)\big)\) for some \(T>0\) and we now assume that \(T\) is the maximal time of existence. Using the same notations \(X(t,x),v_0(t,x),v_1(t,x),m(t)\) and \(q(t)\) as in Section \ref{sec3},
	it then follows from \eqref{eq:main-1} that 
	\begin{align}
	&\frac{\diff v_0}{\diff t}+K_0(t,x)=0,\label{54}\\ 
	&\frac{\diff v_1}{\diff t}+v_1^2+K_1(t,x)=0,\label{55}
	\end{align}
	where 
	\begin{equation}\label{fKdV-kernal}
	\begin{aligned}
	&K_0(t,x)
	=\int_\R\frac{\mathrm{sgn}(y)}{|y|^{2+\alpha}}[u(X(t,x),t)-u(X(t,x)-y,t)]\,\diff y,\\
	&K_1(t,x)
	=\int_\R \frac{\mathrm{sgn}(y)}{|y|^{2+\alpha}}[\partial_xu(X(t,x),t)-\partial_xu(X(t,x)-y,t)]\,\diff y.
	\end{aligned}
	\end{equation}	
	
	We are done if we show
	\begin{equation}\label{56}
	\begin{aligned}
	|K_1(t,x)|<\delta^2 m^2(t),\quad \mathrm{for\ all}\ t\in[0,T)\ \mathrm{and} \ x\in\R.
	\end{aligned}
	\end{equation}
	In view of \eqref{t3c1}, one easily checks that \eqref{56} holds at \(t=0\). 
	We will prove \eqref{56} by contradiction. Suppose that \(|K_1(T_1,x_0)|=\delta^2 m^2(T_1)\) for some \(T_1\in(0,T)\) and some \(x_0\in\R\). 
	By continuity, without loss of generality, we may assume that
	\begin{equation*}
	\begin{aligned}
	|K_1(t,x)|\leq \delta^2 m^2(t),\quad \text{for\ all}\ t\in[0,T_1]\ \mathrm{and} \ x\in\R.
	\end{aligned}
	\end{equation*}

	\bigskip
	We claim that
	\begin{align}\label{57}
	\|v_0(t)\|_{L^\infty}=\|u(t)\|_{L^\infty}<C_0,\quad   \text{for\ all}\  t\in [0,T_1], 
	\end{align}
	and
	\begin{align}\label{58}
	\|v_1(t)\|_{L^\infty}=\|\partial_xu(t)\|_{L^\infty}<C_1q^{-1}(t),\quad   \text{for\ all}\  t\in [0,T_1], 
	\end{align}
	where \(C_0,C_1\) satisfy \eqref{t3c4}. First observe that 
	\begin{equation*}
	\begin{aligned}
	\|v_0(0)\|_{L^\infty}=\|\phi\|_{L^\infty}<C_0,
	\end{aligned}
	\end{equation*}
	and
	\begin{equation*}
	\begin{aligned}
	\|v_1(0)\|_{L^\infty}=\|\phi^\prime\|_{L^\infty}<C_1q^{-1}(0).
	\end{aligned}
	\end{equation*}
	We then proceed by  contradiction in order  to show \eqref{57} and \eqref{58}. 
	Suppose that \eqref{57} and \eqref{58} hold for all  \(t\in [0, T_2)\), but fails for either \eqref{57} or \eqref{58} at \(t = T_2\) for some \(T_2\in (0, T_1]\). Hence, by continuity, it holds
	\begin{align}\label{59}
	\|v_0(t)\|_{L^\infty}=\|u(t)\|_{L^\infty}<C_0,\quad   \text{for\ all}\  t\in [0,T_2], 
	\end{align}
	and
	\begin{align}\label{60}
	\|v_1(t)\|_{L^\infty}=\|\partial_xu(t)\|_{L^\infty}<C_1q^{-1}(t),\quad   \text{for\ all}\  t\in [0,T_2].
	\end{align}

	 Let \(\eta\in(0,1]\), we split the integral into two parts:
	\begin{equation*}
	\begin{aligned}
	K_0(t,x)&=\underbrace{\int_{|y|<\eta}\frac{\mathrm{sgn}(y)}{|y|^{2+\alpha}}[u(X(t,x),t)-u(X(t,x)-y,t)]\,\diff y}_{I_1}\\
	&\quad+\underbrace{\int_{|y|\geq\eta}\frac{\mathrm{sgn}(y)}{|y|^{2+\alpha}}[u(X(t,x),t)-u(X(t,x)-y,t)]\,\diff y}_{I_2}.
	\end{aligned}
	\end{equation*}
We then estimate
	\begin{equation}\label{61}
	\begin{aligned}
	|I_1|
	&=\bigg|\int_{|y|<\eta}\frac{\mathrm{sgn}(y)}{|y|^{2+\alpha}}[u(X(t,x)-y,t)-u(X(t,x),t)]\,\diff y\bigg|\\
	&\leq |u|_{C^{0,1}(\R)} \int_{|y|<\eta}\frac{1}{|y|^{2+\alpha}}|y|\,\diff y\\
	&\leq \frac{2}{-\alpha}\eta^{-\alpha}\|\partial_xu\|_{L^\infty}\leq \frac{2C_1}{-\alpha}\eta^{-\alpha}q(t)^{-1},
	\end{aligned}
	\end{equation}
and   	
	\begin{equation}\label{61.5}
	\begin{aligned}
	|I_2|
	\leq \frac{4}{1+\alpha}\eta^{-(1+\alpha)}\|v_0\|_{L^\infty}\leq \frac{4C_0}{1+\alpha}\eta^{-(1+\alpha)}.
	\end{aligned}
	\end{equation}

	Choosing \(\eta=q(t)\), for all \(t\in[0,T_2]\) and for all \(x\in\R\), one obtains from \eqref{61} and \eqref{61.5} that
	\begin{equation*}
	\begin{aligned}
	|K_0(t,x)|\leq \bigg(\frac{4C_0}{1+\alpha}+\frac{2C_1}{-\alpha}\bigg)q(t)^{-(1+\alpha)}.
	\end{aligned}
	\end{equation*}
This together with \eqref{54} and \eqref{a7} yields 
	\begin{equation}\label{62}
	\begin{aligned}
	&|v_0(t,x)|\leq \|\phi\|_{L^\infty}+\int_0^{t}|K_0(t,x)|\,\diff t\\
	&\leq \frac{1}{2}C_0+\bigg(\frac{4C_0}{1+\alpha}+\frac{2C_1}{-\alpha}\bigg)\int_0^{t}
	q^{-(1+\alpha)}(\tau)\,\diff \tau\\
	&\leq \frac{1}{2}C_0-\bigg(\frac{4C_0}{1+\alpha}+\frac{2C_1}{-\alpha}\bigg)m^{-1}(0)(1-\delta)^{-(\alpha+2)}(-\alpha)^{-1}[(1-\delta)^{\alpha}-q^{-\alpha}(t)]\\
	&\leq \frac{1}{2}C_0+\frac{1}{\alpha}\bigg(\frac{4C_0}{1+\alpha}+\frac{2C_1}{-\alpha}\bigg)(1-\delta)^{-2}m^{-1}(0)\\
	&< C_0,
	\end{aligned}
	\end{equation}
	for all \(t\in[0,T_2]\) and for all \(x\in\R\), where we have used  \eqref{t3c2}.

	To estimate \(K_1(t,x)\), we also split the integral into two parts:
	\begin{equation*}
	\begin{aligned}
	K_1(t,x)&=\underbrace{\int_{|y|<\eta}\frac{\mathrm{sgn}(y)}{|y|^{2+\alpha}}[\partial_xu(X(t,x),t)-\partial_xu(X(t,x)-y,t)]\,\diff y}_{I_3}\\
	&\quad+\underbrace{\int_{|y|\geq\eta}\frac{\mathrm{sgn}(y)}{|y|^{2+\alpha}}[\partial_xu(X(t,x),t)-\partial_xu(X(t,x)-y,t)]\,\diff y}_{I_4}.
	\end{aligned}
	\end{equation*}
	The term \(I_4\) can be simply estimated  	as
	\begin{equation}\label{63}
	\begin{aligned}
	|I_4|
	\leq \frac{4C_1}{1+\alpha}\eta^{-(1+\alpha)}q^{-1}(t).
	\end{aligned}
	\end{equation}
	For the term \(I_3\), we estimate
	\begin{equation}\label{64}
	\begin{aligned}
	|I_3|
	\leq |\partial_xu|_{C^{0,1}(\R)} \int_{|y|<\eta}\frac{1}{|y|^{2+\alpha}}|y|\,\diff y
	\leq \frac{2}{-\alpha}\eta^{-\alpha}\|\partial_x^2u\|_{L^\infty}.
	\end{aligned}
	\end{equation}
	
	Following the same line as in the proof of \eqref{45}-\eqref{47.5} and using the assumption that \(\delta>0\) is sufficient small (to make the last inequality of \eqref{47} to be true), one has
	\begin{equation}\label{65}
	\begin{aligned}
	\|\partial_x^2u\|_{L^\infty}\leq C_{\mathrm{GN}} \|\partial_xu\|_{L^\infty}^{\frac{1}{3}}\|\partial_x^3u\|_{L^2}^{\frac{2}{3}}\leq 9
	C_{\mathrm{GN}}C_1^{\frac{1}{3}}\|\phi^{\prime\prime\prime}\|_{L^2}^{\frac{2}{3}}q(t)^{-\frac{1}{3}-\frac{7}{3(1-\delta)^2}}.
	\end{aligned}
	\end{equation}
	We then may estimate in view of \eqref{64} and \eqref{65} that
	\begin{equation}\label{66}
	\begin{aligned}
	|I_3|
	\leq \frac{18C_{\mathrm{GN}}C_1^{\frac{1}{3}}\|\phi^{\prime\prime\prime}\|_{L^2}^{\frac{2}{3}}}{-\alpha}
	\eta^{-\alpha}
	q(t)^{-\frac{1}{3}-\frac{7}{3(1-\delta)^2}}.
	\end{aligned}
	\end{equation}

	By choosing \(\eta=q(t)^{-\frac{2}{3}+\frac{7}{3(1-\delta)^2}}\), we conclude from \eqref{63} and \eqref{66} that	
	\begin{equation}\label{67}
	\begin{aligned}
	|K_1(t,x)|
	&\leq \bigg(\frac{4C_1}{1+\alpha}+\frac{18C_{\mathrm{GN}}C_1^{\frac{1}{3}}\|\phi^{\prime\prime\prime}\|_{L^2}^{\frac{2}{3}}}{-\alpha}\bigg)q(t)^{-\big[{\frac{1}{3}+\frac{7}{3(1-\delta)^2}}\big](1+\alpha)+\alpha}\\
	&\leq \bigg(\frac{4C_1}{1+\alpha}+\frac{18C_{\mathrm{GN}}C_1^{\frac{1}{3}}\|\phi^{\prime\prime\prime}\|_{L^2}^{\frac{2}{3}}}{-\alpha}\bigg)q^{-2}(t),
	\end{aligned}
	\end{equation}
	for all \([0, T_2 ]\) and all \(x\in\R\), where we have used
	\begin{equation*}
	\begin{aligned}
	-\bigg[{\frac{1}{3}+\frac{7}{3(1-\delta)^2}}\bigg](1+\alpha)+\alpha\geq -2,
	\end{aligned}
	\end{equation*}
	which follows from  the assumptions that \(\delta>0\) is sufficient small and \(\alpha\in(-1,\frac{5(1-\delta)^2-7}{7-2(1-\delta)^2})\).	
	In view of \eqref{55}, \eqref{67} and \eqref{a7}, one has
	\begin{equation}\label{68}
	\begin{aligned}
	&v_1(t,x)
	\leq\|\phi^\prime\|_{L^\infty}+\bigg(\frac{4C_1}{1+\alpha}+\frac{18C_{\mathrm{GN}}C_1^{\frac{1}{3}}\|\phi^{\prime\prime\prime}\|_{L^2}^{\frac{2}{3}}}{-\alpha}\bigg)\int_0^tq^{-2}(\tau)\,\diff \tau\\
	&\leq \frac{1}{2}C_1q^{-1}(t)-(1-\delta)^{-3}m^{-1}(0)\bigg(\frac{4C_1}{1+\alpha}+\frac{18C_{\mathrm{GN}}C_1^{\frac{1}{3}}\|\phi^{\prime\prime\prime}\|_{L^2}^{\frac{2}{3}}}{-\alpha}\bigg)
	[q^{-1}(t)-(1-\delta)]\\
	&\leq \frac{1}{2}C_1q^{-1}(t)-(1-\delta)^{-3}m^{-1}(0)\bigg(\frac{4C_1}{1+\alpha}+\frac{18C_{\mathrm{GN}}C_1^{\frac{1}{3}}\|\phi^{\prime\prime\prime}\|_{L^2}^{\frac{2}{3}}}{-\alpha}\bigg)q^{-1}(t)\\
	&<C_1q^{-1}(t),
	\end{aligned}
	\end{equation}
	where we have used \eqref{t3c3}.
	On the other hand, one also has
	\begin{equation}\label{69}
	\begin{aligned}
	v_1(t,x)\geq m(t)=m(0)q^{-1}(t)
	\geq-\frac{1}{2}C_1q^{-1}(t).
	\end{aligned}
	\end{equation}

With \eqref{62}, \eqref{68} and \eqref{69} at hand,	the same argument in Section \ref{sec3} can be used to complete the proof.

\end{proof}

\section{Proof of Theorem \ref{th:main-2}}\label{sec6}

\begin{proof}[Proof of Theorem \ref{th:main-2}] It is trivial that the Cauchy problem of \eqref{eq:BH} with \(u(0,x)=\phi(x)\) is well-posed in the class \(C\big([0,T^*):H^2(\R)\big)\) for some \(T^*>0\). We assume that \(T^*\) is the maximal time of existence hereafter.
	Let \(Y(t)\) solve the ODE
	\begin{align*}
	\frac{\mathrm{d}}{\mathrm{d} t}Y(t)=u(t,Y(t)),\quad Y(0)=0.
	\end{align*}
	It is easy to see that \(Y(t)\) is well-defined over \([0,T^*)\). We define 
	\begin{align*}
	v(t,x)=u(t,x+Y(t)).
	\end{align*}
	A small calculation shows that \(v\) satisfies the equation
	\begin{align}\label{eq:BH-new}
	\big(v(t,x)-v(t,0)\big)_t+\big(v(t,x)-v(t,0)\big)v_x(t,x)-\mathcal{H}\big(v(t,x)-v(t,0)\big)=0.
	\end{align}
	
	We introduce the weighted velocity with a smooth, fast decay weight on {\it half line} as follows:
	\begin{align}\label{half line}
	F(t)=-\int_0^\infty \big(v(t,x)-v(t,0)\big)\exp(-x)\, \diff x.
	\end{align}
	 Suppose \(T^*=\infty\).
	It results from \eqref{eq:BH-new} that
	\begin{equation}\label{b3}
	\begin{aligned}
	\frac{\mathrm{d}}{\mathrm{d} t}F(t)
	&=\int_0^\infty\big[\big(v(t,x)-v(t,0)\big)\partial_xv-\mathcal{H}\big(v(t,x)-v(t,0)\big)\big]\exp(-x)\,\diff x.
	\end{aligned}
	\end{equation}
	Since \(v(t,x)-v(t,0)\) vanishes at \(x=0\), by integrating by parts, one has
	\begin{equation}\label{b4}
	\begin{aligned}
	&\int_0^\infty \big(v(t,x)-v(t,0)\big)\partial_xv\exp(-x)\,\diff x\\
	&=\int_0^\infty \big(v(t,x)-v(t,0)\big)\partial_x\big(v(t,x)-v(t,0)\big)\exp(-x)\,\diff x\\
	&=\frac{1}{2}\int_0^\infty \big(v(t,x)-v(t,0)\big)^2\exp(-x)\,\diff x=\colon Q(t).
	\end{aligned}
	\end{equation}
	By the property of Hilbert transform, one obtains
	\begin{equation}\label{b5}
	\begin{aligned}
	&\int_0^\infty\mathcal{H}\big(v(t,x)-v(t,0)\big)\exp(-x)\,\diff x\\
	&\leq \bigg(\int_0^\infty[\mathcal{H}\big(v(t,x)-v(t,0)\big)]^2\,\diff x\bigg)^{\frac{1}{2}}\bigg(\int_0^\infty\exp(-2x)\,\diff x\bigg)^{\frac{1}{2}}\\
	&\leq \frac{\sqrt{2}}{2}\bigg(\int_\R[\mathcal{H}\big(v(t,x)-v(t,0)\big)]^2\,\diff x\bigg)^{\frac{1}{2}}\leq\frac{\sqrt{2}}{2}\|v(t,x)-v(t,0)\|_{L^2}\\
	&=\frac{\sqrt{2}}{2}\|u(t,x+y(t))-u(t,x)\|_{L^2}\leq \sqrt{2}\|u_0\|_{L^2}.
	\end{aligned}
	\end{equation}
	On the other hand, we have
	\begin{equation}\label{b6}
	\begin{aligned}
	F^2(t)
	\leq\int_0^\infty \big(v(t,x)-v(t,0)\big)^2\exp(-x)\,\diff x\int_0^\infty\exp(-x)\,\diff x=2Q(t).
	\end{aligned}
	\end{equation}
	We conclude from \eqref{b3}-\eqref{b6} that
	\begin{align*}
	\frac{\mathrm{d}}{\mathrm{d} t}F(t)\geq \frac{1}{2}F^2(t)-\sqrt{2}\|u_0\|_{L^2}.
	\end{align*}
	In view of \eqref{b1}, it follows that
	\begin{align*}
	F(t)\geq \frac{F(0)}{1-\frac{F(0)}{4}t},
	\end{align*}
	which means that \(F(t)\) will blow up no later than the time \(\frac{4}{F(0)}=\colon T^{**}\) that confirms \eqref{b2}. On the other hand
	\begin{align*}
	F(t)\leq \sup_{x\in\R}\bigg|\frac{v(t,x)-v(t,0)}{x}\bigg|\int_0^\infty x\exp(-x)\, \diff x\leq C\|u_x\|_{L^\infty},
	\end{align*}
	which leads to \eqref{b2.5}.
	This contradiction completes the proof.
\end{proof}

\section{The rescaled Whitham equation}\label{sec7}
We consider here the rescaled Whitham equation \eqref{Whitresc}. We will revisit Theorem \ref{th:W} and its proof by keeping the small parameter $\epsilon,$ our goal being to estimate the blow-up time which should of course be at least of order $\bigO(1/\epsilon)$ since the local solution of the Cauchy problem exists at least on this time scale.

We first need a "rescaled" version of Lemma \ref{le:Hur} :

\begin{lemma} \label{Hur2}There exist constants \(L_0, L_\infty>0\) and \(\eta_0\in(0,1)\) such that
	\begin{equation*}
	\begin{aligned}
	K_\epsilon(x)\leq \epsilon^{-1/4}\frac{L_0}{\sqrt{|x|}}\quad   \mathrm{and}\ |K^\prime(x)|\leq \epsilon^{-1/4}\frac{L_0}{\sqrt{|x|^3}},\quad  \mathrm{for}\   0<|x|<\epsilon^{1/2}\eta_0,
	\end{aligned}
	\end{equation*}
	and
	\begin{equation*}
	\begin{aligned}
	\int_{\epsilon^{1/2}\eta_0}^\infty |K_\epsilon^\prime(x)|\,\diff x\leq \epsilon^{-1/2}L_\infty.
	\end{aligned}
	\end{equation*}	
\end{lemma}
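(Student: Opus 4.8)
The plan is to deduce Lemma \ref{Hur2} directly from Lemma \ref{le:Hur} by the change of variables encoded in the scaling relation \eqref{res-kernal}, namely $K_\epsilon(x)=\epsilon^{-1/2}K(\epsilon^{-1/2}x)$. Set $z=\epsilon^{-1/2}x$, so that $|x|<\epsilon^{1/2}\eta_0$ corresponds to $|z|<\eta_0$, and I may take $\eta_0\in(0,1)$ (indeed $\eta_0=1$, but keeping room below $1$ costs nothing). First I would substitute into Lemma \ref{le:Hur}: for $|z|\le 1$ we have $K(z)\le L_0/\sqrt{|z|}$, hence
\begin{equation*}
K_\epsilon(x)=\epsilon^{-1/2}K(\epsilon^{-1/2}x)\le \epsilon^{-1/2}\frac{L_0}{\sqrt{\epsilon^{-1/2}|x|}}=\epsilon^{-1/2}\epsilon^{1/4}\frac{L_0}{\sqrt{|x|}}=\epsilon^{-1/4}\frac{L_0}{\sqrt{|x|}},
\end{equation*}
which is exactly the claimed bound on $K_\epsilon$. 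Likewise, since $K_\epsilon'(x)=\epsilon^{-1}K'(\epsilon^{-1/2}x)$ and $|K'(z)|\le L_0/\sqrt{|z|^3}$ for $0<|z|\le 1$, I get $|K_\epsilon'(x)|\le \epsilon^{-1}\epsilon^{3/4}L_0/\sqrt{|x|^3}=\epsilon^{-1/4}L_0/\sqrt{|x|^3}$ on $0<|x|<\epsilon^{1/2}\eta_0$; here one should write $|K_\epsilon'|$ rather than $|K'|$ in the statement, but the intended quantity is the derivative of the rescaled kernel.

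For the tail estimate, I would again change variables in the integral. Writing $z=\epsilon^{-1/2}x$, $\diff x=\epsilon^{1/2}\diff z$, and using $K_\epsilon'(x)=\epsilon^{-1}K'(\epsilon^{-1/2}x)$,
\begin{equation*}
\int_{\epsilon^{1/2}\eta_0}^\infty |K_\epsilon'(x)|\,\diff x=\int_{\epsilon^{1/2}\eta_0}^\infty \epsilon^{-1}|K'(\epsilon^{-1/2}x)|\,\diff x=\epsilon^{-1}\epsilon^{1/2}\int_{\eta_0}^\infty |K'(z)|\,\diff z=\epsilon^{-1/2}\int_{\eta_0}^\infty |K'(z)|\,\diff z.
\end{equation*}
If $\eta_0=1$ this is $\epsilon^{-1/2}\int_1^\infty|K'(z)|\,\diff z\le \epsilon^{-1/2}L_\infty$ by the second part of Lemma \ref{le:Hur}; if one prefers $\eta_0<1$ strictly, one enlarges $L_\infty$ to absorb the contribution of $\int_{\eta_0}^1|K'(z)|\,\diff z$, which is finite because $|K'(z)|\le L_0|z|^{-3/2}$ is integrable away from the origin on $[\eta_0,1]$. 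Either way the constant $L_\infty$ (possibly renamed) works, and it is the same $L_0,L_\infty$ as in Lemma \ref{le:Hur} up to harmless enlargement.

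There is essentially no obstacle here — the lemma is a bookkeeping consequence of the exact scaling identity, and the only thing to be careful about is tracking the powers of $\epsilon$: each derivative in $x$ brings a factor $\epsilon^{-1/2}$ from the chain rule, the prefactor in \eqref{res-kernal} contributes $\epsilon^{-1/2}$, a half-power $|x|^{-1/2}$ inside contributes $\epsilon^{1/4}$, and integrating $\diff x$ over a tail contributes $\epsilon^{1/2}$; combining these gives the stated exponents $-1/4$, $-1/4$, and $-1/2$ respectively. The one genuinely substantive remark is that the cutoff scale must be taken at $|x|\sim\epsilon^{1/2}$ (not $|x|\sim 1$): this is the natural length scale of $K_\epsilon$, and it is precisely this choice that will, in the subsequent argument revisiting Theorem \ref{th:W}, force the blow-up time to carry the correct power $\epsilon^{-1}$.
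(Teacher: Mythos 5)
Your proof is correct: the paper states Lemma \ref{Hur2} without proof, treating it as an immediate rescaling of Lemma \ref{le:Hur} via the identity $K_\epsilon(x)=\epsilon^{-1/2}K(\epsilon^{-1/2}x)$, which is exactly the bookkeeping argument you carry out, and your powers of $\epsilon$ all check out. Your observation that the second inequality should read $|K_\epsilon'(x)|$ rather than $|K'(x)|$ is also right — as written it is a typo in the statement.
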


\vspace{0.3cm}
Theorem \ref{th:W} is reformulated as follows:

\begin{theorem}[rescaled Whitham equation]\label{rescWhit} Let \(\delta\in(0,1-\frac{2\sqrt{2}}{3}]\) and \(\epsilon^{-1}\delta<1/2\). Assume that \(\phi\in H^3(\R)\) satisfies
	\begin{equation*}
	\begin{aligned}
	&\delta^2\epsilon^{1/4}(\inf_{\R}\phi^\prime(x))^2> 4L_0C_{\mathrm{Sob}}\|\phi\|_{H^3}+2C_1(3L_0+L_\infty)+36L_0C_{\mathrm{GN}}C_1^{\frac{1}{3}}\|\phi^{\prime\prime\prime}\|_{L^2}^{\frac{2}{3}},\\
	&-(1-\delta)^2\epsilon^{1/4}\inf_{\R}\phi^\prime(x)> 8(3L_0+L_\infty)+16L_0\bigg(\frac{C_1}{C_0}\bigg),\\
	&-(1-\delta)^3\epsilon^{1/4}\inf_{\R}\phi^\prime(x)> 4(3L_0+L_\infty)+36L_0C_{\mathrm{GN}}\bigg(\frac{\|\phi^{\prime\prime\prime}\|_{L^2}}{C_1}\bigg)^{\frac{2}{3}},
	\end{aligned}
	\end{equation*}
	where the constant \(C_0\) and \(C_1\) satisfy 	
	\begin{equation*}
	\begin{aligned}
	\|\phi\|_{L^\infty}\leq\frac{C_0}{2},\quad 
	\|\phi^\prime\|_{L^\infty}\leq\frac{C_1}{2}.
	\end{aligned}
	\end{equation*}	
	Then the solution of the Cauchy problem \eqref{Whitresc} with the initial data \(u(0,x)=\phi(x)\) exhibits wave breaking at some time \(T>0\), namely
	\begin{equation*}
	\begin{aligned}
	|u(x,t)|<\infty,\quad x\in\R, t\in[0,T),
	\end{aligned}
	\end{equation*}	
	but	
	\begin{equation*}
	\begin{aligned}
	\inf_{\R}\partial_xu(x,t)\longrightarrow -\infty,\quad \text{as}\ t\longrightarrow T-.
	\end{aligned}
	\end{equation*}	
	Moreover
	\begin{equation}\label{30.5}
	\begin{aligned}
	-\frac{1}{\inf_{\R}\phi^\prime(x)}\frac{1}{\epsilon(1+\epsilon^{-1}\delta)}<T<-\frac{1}{\inf_{\R}\phi^\prime(x)}\frac{1}{\epsilon(1-\epsilon^{-1}\delta)^2}.
	\end{aligned}
	\end{equation}
	
\end{theorem}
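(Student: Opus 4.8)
The plan is to rerun the argument of Section~\ref{sec4}, keeping the parameter $\epsilon$ everywhere and replacing $K$ by $K_\epsilon$ and Lemma~\ref{le:Hur} by Lemma~\ref{Hur2}. First I would set up the characteristics by $\frac{\diff}{\diff t}X(t,x)=\epsilon u(X(t,x),t)$, $X(0,x)=x$, put $v_0=u(X,t)$, $v_1=\partial_xu(X,t)$, $m(t)=\inf_xv_1(t,x)=:m(0)q^{-1}(t)$, and check from \eqref{Whitresc} that $\frac{\diff v_0}{\diff t}+K_0=0$ and $\frac{\diff v_1}{\diff t}+\epsilon v_1^2+K_1=0$, where $K_0=\int_\R K_\epsilon(y)\partial_xu(X(t,x)-y,t)\,\diff y$ and $K_1=\int_\R K_\epsilon(y)\partial_x^2u(X(t,x)-y,t)\,\diff y$; the factor $\epsilon$ in front of the Riccati term $v_1^2$ is exactly what turns the blow-up time from order $1/(-\inf_\R\phi^\prime)$ into order $1/(\epsilon(-\inf_\R\phi^\prime))$ in \eqref{30.5}. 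As in Section~\ref{sec4}, the whole result reduces to the a~priori bound $|K_1(t,x)|<\delta^2m^2(t)$ for all $t\in[0,T)$ and $x\in\R$.

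I would prove this bound by the same continuity/bootstrap scheme. At $t=0$ one splits the convolution defining $K_1(0,\cdot)$ at $|y|=\epsilon^{1/2}\eta_0$ and uses Lemma~\ref{Hur2}, Sobolev embedding and the first smallness hypothesis. On the bootstrap interval one first controls $\|u(t)\|_{L^\infty}<C_0$ and $\|\partial_xu(t)\|_{L^\infty}<C_1q^{-1}(t)$ — the bounds on $K_0$ using Lemma~\ref{Hur2} (with an integration by parts in the outer region) together with these $L^\infty$ bounds and the second hypothesis — and then obtains $|K_1(t,x)|\le(\mathrm{const})\,q^{-2}(t)$ from the interpolation $\|\partial_x^2u\|_{L^\infty}\le C_{\mathrm{GN}}\|\partial_xu\|_{L^\infty}^{1/3}\|\partial_x^3u\|_{L^2}^{2/3}$ and the energy identity $\frac{\diff}{\diff t}\|\partial_x^3u\|_{L^2}^2=-7\epsilon\int_\R\partial_xu(\partial_x^3u)^2\,\diff x\le-7\epsilon m(0)q^{-1}(t)\|\partial_x^3u\|_{L^2}^2$ (the dispersive term drops because $K_\epsilon$ is even), which yields the same bound $\|\partial_x^3u\|_{L^2}\le16\|\phi^{\prime\prime\prime}\|_{L^2}q(t)^{-7/(2(1-\delta)^2)}$ as \eqref{47} — the extra $\epsilon$ simply rescales time and $q$ together. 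Integrating the two ODEs along characteristics with Lemma~\ref{le:a1} then closes the bootstrap, after which $|K_1|<\delta^2m^2$ follows on all of $[0,T)$ exactly as in Section~\ref{sec4}. Here the assumption $\delta\in(0,1-\tfrac{2\sqrt2}{3}]$ is used as in Section~\ref{sec4}, to bound the constants coming from the energy estimates and to guarantee exponent inequalities such as $-\tfrac23-\tfrac{7}{6(1-\delta)^2}\ge-2$.

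Once $|K_1(t,x)|<\delta^2m^2(t)$ is known, the blow-up follows from the characteristic ODE comparison. Writing $r=m(0)/v_1$ one has $\dot r=\epsilon m(0)\bigl(1+K_1/(\epsilon v_1^2)\bigr)$, so on the set $\{x:v_1(t,x)\le(1-\epsilon^{-1}\delta)m(t)\}$, where $v_1^2\ge(1-\epsilon^{-1}\delta)^2m^2$, the bound $|K_1|<\delta^2m^2$ gives $\bigl|K_1/(\epsilon v_1^2)\bigr|<\epsilon^{-1}\delta$ (the hypothesis $\epsilon^{-1}\delta<1/2$, together with $\delta\le1-\tfrac{2\sqrt2}{3}<\tfrac14\le(1-\epsilon^{-1}\delta)^2$, keeps $1+K_1/(\epsilon v_1^2)$ positive and bounded away from $0$). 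Thus, rescaling time by $\tau=\epsilon t$ turns $\dot r$ into the form treated in Section~\ref{sec4} with $\delta$ replaced by $\epsilon^{-1}\delta$, and Lemma~\ref{le:a1} produces $(1-\epsilon^{-1}\delta)+\epsilon(1-\epsilon^{-2}\delta^2)\inf_\R\phi^\prime\,t\le q(t)\le(1-\epsilon^{-1}\delta)^{-1}+\epsilon(1-\epsilon^{-1}\delta)\inf_\R\phi^\prime\,t$; locating the zeros of these two linear bounds on $q$ gives \eqref{30.5}.

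The main obstacle is the kernel bookkeeping in the step that bounds $K_0$ and $K_1$. Lemma~\ref{Hur2} only controls $K_\epsilon$ and $K_\epsilon^\prime$ pointwise on the small scale $|y|<\epsilon^{1/2}\eta_0$, and beyond that scale gives only the $L^1$ bound $\int_{\epsilon^{1/2}\eta_0}^\infty|K_\epsilon^\prime|\le\epsilon^{-1/2}L_\infty$. Consequently the splitting $K_j=\bigl(\int_{|y|\le\eta}\bigr)+\bigl(\int_{|y|>\eta}\bigr)$ must be done with the cutoff $\eta$ constrained to lie below $\epsilon^{1/2}\eta_0$ (not merely chosen as a power of $q(t)$, as in Section~\ref{sec4}), and after the integration by parts in the outer piece one must verify that the powers of $\epsilon$ that appear — a factor $\epsilon^{-1/4}$ from the pointwise kernel bounds evaluated at the cutoff, and factors $\epsilon^{-1/2}$ from $K_\epsilon$ and $\int|K_\epsilon^\prime|$ at the scale $\epsilon^{1/2}\eta_0$, the latter compensated by the $\epsilon^{1/2}$ gained because the boundary term is a difference $\partial_xu(X\mp\epsilon^{1/2}\eta_0)$ of a function across a gap of width $\sim\epsilon^{1/2}$ — assemble into the single $\epsilon^{1/4}$ appearing on the left of the smallness conditions. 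Everything else is a line-by-line transcription of Section~\ref{sec4}.
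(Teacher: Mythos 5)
Your proposal is correct and follows essentially the same route the paper intends: the authors explicitly omit this proof, stating only that it "follows that of Theorem \ref{th:W} and Lemma \ref{le:a1}--Lemma \ref{le:a3} by keeping the small parameter $\epsilon$," which is precisely your line-by-line transcription with the Riccati term $\epsilon v_1^2$, the rescaled kernel bounds of Lemma \ref{Hur2}, and $\delta$ replaced by $\epsilon^{-1}\delta$ in the Appendix lemmas. Your added care about keeping the cutoff below $\epsilon^{1/2}\eta_0$ and tracking how the powers of $\epsilon$ assemble into the $\epsilon^{1/4}$ on the left of the hypotheses is exactly the bookkeeping the paper leaves implicit.
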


\vspace{0.3cm}
It is worth relating Theorem \ref{rescWhit} to the results in \cite {KLPS} that compare the solution of the rescaled Whitham equation and that of the KdV equation 

\begin{equation}\label{KdV}
\partial_t w+\partial_x w+\epsilon w\partial_x w+ \frac{1}{6}\partial_x^3 w=0.
\end{equation}

Actually the next result is proven in \cite{KLPS} (Theorem 2).

\begin{theorem} \label{compare}
	Let $\phi \in H^{\infty}(\mathbb R)$ and let $u$ and $w$ be the respective solutions of \eqref{Whitresc} and \eqref{KdV} with initial data $\phi.$ Then, for all $j \in \mathbb N$, $j \ge 0$, there exists
	$M_j=M_j(\|\phi\|_{H^{j+8}})>0$  such that
	\begin{equation} \label{maintheo.1}
	\|(u-w)(t)\|_{H^j_x} \le M_j \epsilon^2t,
	\end{equation}
	for all $0 \le t \lesssim \epsilon^{-1}$.
\end{theorem}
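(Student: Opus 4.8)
The plan is to regard the KdV solution $w$ as an approximate solution of the rescaled Whitham equation \eqref{Whitresc} and to estimate the difference $v:=u-w$ by a stability argument; the factor $\epsilon^2$ will come from a consistency (residual) estimate and the extra factor $t$ from integrating that residual along the flow. Writing \eqref{Whitresc} as $\partial_t u+\mathcal L_\epsilon\partial_x u+\epsilon u\partial_x u=0$, where $\mathcal L_\epsilon$ is the Fourier multiplier with the real even symbol $m_\epsilon(\xi)=\sqrt{\tanh(\sqrt\epsilon\,\xi)/(\sqrt\epsilon\,\xi)}$ of \eqref{res-kernal} (so that the nonlocal term in \eqref{Whitresc} is $\mathcal L_\epsilon\partial_x u$), I first insert $w$ and use \eqref{KdV} to eliminate $\partial_t w$. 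The quadratic terms $\epsilon w\partial_x w$ cancel identically, leaving a purely linear residual
\begin{equation*}
R:=\partial_t w+\mathcal L_\epsilon\partial_x w+\epsilon w\partial_x w=r_\epsilon(D)w,
\end{equation*}
where the multiplier $r_\epsilon$ is the difference between $i\xi m_\epsilon(\xi)$ and the symbol of the dispersive part of \eqref{KdV}.

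The first key step is the consistency estimate. From the expansion $m_\epsilon(\xi)=1-\tfrac16(\sqrt\epsilon\,\xi)^2+\bigO((\sqrt\epsilon\,\xi)^4)$ one sees that the $\bigO(1)$ and $\bigO(\epsilon)$ parts of $i\xi m_\epsilon(\xi)$ reproduce the transport and the (order-$\epsilon$) dispersive symbols retained in the KdV truncation \eqref{KdV}, so the leading contributions cancel and $r_\epsilon$ is genuinely of second order. Writing $g(z)=\sqrt{\tanh z/z}$ and $z=\sqrt\epsilon\,\xi$, so that $r_\epsilon(\xi)=i\xi\,[g(z)-1+z^2/6]$, the elementary bound $|g(z)-1+z^2/6|\le C z^4/(1+z^2)$ (checked from the small-$z$ expansion and the behaviour $g(z)\sim z^{-1/2}$ as $z\to\infty$) yields $|r_\epsilon(\xi)|\le C\epsilon^2|\xi|^5$. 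Hence
\begin{equation*}
\|R(t)\|_{H^j}=\|r_\epsilon(D)w(t)\|_{H^j}\le C\epsilon^2\|w(t)\|_{H^{j+5}}.
\end{equation*}
Since KdV possesses infinitely many conservation laws, $\sup_{t\ge0}\|w(t)\|_{H^{j+5}}\le C(\|\phi\|_{H^{j+5}})$, so the residual is uniformly $\bigO(\epsilon^2)$ on the whole time interval.

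The second step is an $H^j$ energy estimate for $v$. Subtracting the two equations and writing $u\partial_x u-w\partial_x w=v\partial_x u+w\partial_x v$ gives
\begin{equation*}
\partial_t v+\mathcal L_\epsilon\partial_x v+\epsilon\,v\partial_x u+\epsilon\,w\partial_x v=-R.
\end{equation*}
Applying $\Lambda^j=(1-\partial_x^2)^{j/2}$ and pairing with $\Lambda^j v$, the dispersive term drops out because $\mathcal L_\epsilon\partial_x$ has the purely imaginary symbol $i\xi m_\epsilon(\xi)$ and is therefore skew-adjoint, while the transport terms are controlled by Kato--Ponce commutator and product estimates, $|\langle\Lambda^j(w\partial_x v),\Lambda^j v\rangle|+|\langle\Lambda^j(v\partial_x u),\Lambda^j v\rangle|\lesssim(\|u\|_{H^{j+1}}+\|w\|_{H^{j+1}})\|v\|_{H^j}^2$. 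This produces the differential inequality
\begin{equation*}
\frac{\diff}{\diff t}\|v\|_{H^j}\le C\epsilon\big(\|u\|_{H^{j+1}}+\|w\|_{H^{j+1}}\big)\|v\|_{H^j}+C\epsilon^2\|w\|_{H^{j+5}}.
\end{equation*}

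Finally, I would close the estimate by a continuity (bootstrap) argument: on the maximal interval where $\|u(t)\|_{H^{j+1}}\le 2\sup_t\|w(t)\|_{H^{j+1}}$ the coefficient of $\|v\|_{H^j}$ is $\le C\epsilon$, so Gronwall with $v(0)=0$ gives $\|v(t)\|_{H^j}\le C\epsilon^2\|w\|_{H^{j+5}}(e^{C\epsilon t}-1)/(C\epsilon)$. For $t\lesssim\epsilon^{-1}$ the exponent $C\epsilon t$ stays $\bigO(1)$, whence $e^{C\epsilon t}-1\le C'\epsilon t$ and $\|v(t)\|_{H^j}\le M_j\epsilon^2 t$, which is \eqref{maintheo.1}; the resulting bound $\|u\|_{H^{j+1}}\le\|w\|_{H^{j+1}}+M_j\epsilon^2 t$ is, for small $\epsilon$, below the bootstrap threshold, so the interval is all of $[0,\epsilon^{-1}]$. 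The needed regularity is $\phi\in H^{j+5}$, comfortably inside the stated $H^{j+8}$. I expect the main obstacle to be precisely this long-time closure: one must keep the Gronwall amplification factor $\bigO(1)$ across the $\epsilon^{-1}$ time scale (so the $\epsilon^2$ of the residual is not inflated) while simultaneously propagating enough regularity of the Whitham solution $u$ on that scale, which is exactly where the skew-adjointness of $\mathcal L_\epsilon\partial_x$ and the uniform-in-time KdV bounds are essential.
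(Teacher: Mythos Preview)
The paper does not actually prove Theorem~\ref{compare}: it is quoted verbatim from \cite{KLPS} (``Actually the next result is proven in \cite{KLPS} (Theorem 2)'') and is used here only to contextualise the wave-breaking time in Theorem~\ref{rescWhit}. So there is no proof in the present paper to compare against.

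That said, your outline is exactly the standard consistency--stability argument one expects for such a result, and it is almost certainly the strategy of \cite{KLPS}: compute the residual $R=r_\epsilon(D)w$ left when the KdV solution is plugged into the Whitham equation, show $r_\epsilon=\bigO(\epsilon^2)$ via the Taylor expansion of $\sqrt{\tanh z/z}$, and then run an $H^j$ energy estimate on $v=u-w$ using skew-adjointness of $\mathcal L_\epsilon\partial_x$ and Kato--Ponce, closing with Gronwall on the $\epsilon^{-1}$ time scale. Two small points are worth flagging. First, your bootstrap needs $\|u\|_{H^{j+1}}$, but your estimate only controls $\|v\|_{H^j}$; to close you must run the argument one derivative higher (or, equivalently, first establish a uniform-in-$\epsilon$ long-time $H^s$ bound for the Whitham solution on $[0,\epsilon^{-1}]$, which is itself a nontrivial step). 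This extra loss, together with the need to control commutators sharply, is presumably why the statement asks for $H^{j+8}$ rather than the $H^{j+5}$ your symbol bound alone suggests. Second, for the cancellation to occur at order $\epsilon$ the KdV dispersive term must carry a factor $\epsilon$ (the expansion gives $i\xi m_\epsilon(\xi)=i\xi-\tfrac{\epsilon}{6}i\xi^3+\bigO(\epsilon^2)$); the form \eqref{KdV} as printed in the paper appears to omit this $\epsilon$, but your computation implicitly (and correctly) assumes it.
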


\begin{remark}
\rm{The implicit constant in the notation $t \lesssim \epsilon^{-1}$ depends on $\|\phi\|_{H^2}^{-1}$ for $j=0$, $1$ and $\|\phi\|_{H^{j+1}}^{-1}$  for $j\geq 2.$}
\end{remark}

By Theorem \ref{compare}, the solution \(u\) of the rescaled Whitham equation \eqref{Whitresc} cannot 
blow up before a time of order  \(\bigO\big(\epsilon^{-1}\|\phi\|_{H^3}^{-1}\big)\), which is indeed confirmed by \eqref{30.5} since the obtained  wave breaking time is \(\bigO\big(\epsilon^{-1}[-\inf_{\R}\phi^\prime(x)]^{-1}\big)\).

The proof of Theorem \ref{rescWhit} follows that of Theorem \ref{th:W} and Lemma \ref{le:a1}-Lemma \ref{le:a3} by keeping the small parameter $\epsilon$ and we omit it.

\section{Final remarks}

The results in this paper suggest that the fKdV when $-1\leq \alpha<0$ and the Whitham equation share some properties of the nonlinear hyperbolic Burgers equation. One may ask for instance if they possess global weak (entropy) solutions. This has been proven for the Burgers-Hilbert equation in \cite{BN}.

On the other hand those equations have a (weak) dispersive part, with the possibility of existence of global strong small solutions. This is suggested by some numerical simulations in \cite{KLPS} and has been proven for the {\it cubic} fKdV equation with $-1<\alpha<0$ in the work \cite{SW} of the Authors.

 \section{Appendix} 
 
In the proofs of Theorem \ref{th:BH}, \ref{th:W}, \ref{th:fKdV}, and \ref{rescWhit}, we need to handle the following ODE
 \begin{align}\label{a0} 
 \frac{\diff v_1}{\diff t}+\epsilon v_1^2+K_{1,\epsilon}(t,x)=0.
 \end{align}
For the Burgers-Hilbert equation \eqref{eq:BH}, the Whitham equation \eqref{Whit}, and the fKdV equation \eqref{eq:main-1}, one shall take \(\epsilon=1\) in \eqref{a0}, and \(K_{1,\epsilon}(t,x)\\
=\colon K_1(t,x)\) being defined as \eqref{BH-kernal}, \eqref{Whit-kernal}, and \eqref{fKdV-kernal} respectively. For the rescaled Whitham equation \eqref{Whitresc}, one shall keep \(\epsilon\) being a small parameter in \eqref{a0} and define \(K_{1,\epsilon}(t,x)\) by 
 \begin{align*}
 K_{1,\epsilon}(t,x)=\int_\R K_\epsilon(y)\partial_x^2u(X(t,x)-y,t)\,\diff y,
 \end{align*}
where \(K_\epsilon(\cdot)\) is given by \eqref{res-kernal}. 

In the following we always assume \(\delta\in(0,\frac{1}{2})\), \(\epsilon\in (0,1]\), \(\epsilon^{-1}\delta<1/2\), and \(t\in[0,T_1]\), and then let 
\begin{equation*}
\begin{aligned}
	\Sigma_{\delta,\epsilon}(t)=\{x\in\R:v_1(t,x)\leq (1-\epsilon^{-1}\delta)m(t)\},
\end{aligned}
\end{equation*}
and also define
 \[v_1(t,x)=:m(0)r^{-1}(t,x).\]
We denote \(\Sigma_{\delta,1}(t)=\colon \Sigma_\delta(t)\) (\(\epsilon=1\)) for simplicity. 

The following technical lemmas for \(\epsilon=1\) were proved in \cite{NS,HT,Hur}, which can be extended to all \(\epsilon\in (0,1]\) with slight modifications. 
We include the proofs here for the sake of completeness and readers' convenience.
 
 \begin{lemma}\label{le:a1} One has \(\Sigma_{\delta,\epsilon}(t_2)\subset \Sigma_{\delta,\epsilon}(t_1) \) whenever \(0\leq t_1\leq t_2\leq T_1\).
 	
 \end{lemma}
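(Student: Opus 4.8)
**Proof plan for Lemma \ref{le:a1} ($\Sigma_{\delta,\epsilon}(t_2)\subset\Sigma_{\delta,\epsilon}(t_1)$ for $0\le t_1\le t_2\le T_1$).**

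The plan is to argue by contraposition along characteristics: if $x\notin\Sigma_{\delta,\epsilon}(t_1)$, i.e.\ $v_1(t_1,x)>(1-\epsilon^{-1}\delta)m(t_1)$, then the same strict inequality should persist at $t_2$, giving $x\notin\Sigma_{\delta,\epsilon}(t_2)$. The key structural facts I would use are that $v_1(t,x)$ solves the ODE \eqref{a0} along the characteristic through $x$, that $m(t)=\inf_x v_1(t,x)$ is the running infimum, and that by assumption \eqref{8}/\eqref{9} (in the form $|K_{1,\epsilon}(t,x)|\le\delta^2 m^2(t)$, valid on $[0,T_1]$) the forcing term in \eqref{a0} is controlled by $\delta^2 m^2(t)$. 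One should also record that $m(t)<0$ and $q(t)>0$ on $[0,T_1]$, so $m(t)=m(0)q^{-1}(t)$ has a definite sign, and that $\epsilon^{-1}\delta<1/2$ so that $1-\epsilon^{-1}\delta\in(1/2,1)$.

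The main steps, in order: First, fix $x$ with $v_1(t_1,x)>(1-\epsilon^{-1}\delta)m(t_1)$ and consider the scalar function $t\mapsto v_1(t,x)$ on $[t_1,t_2]$. From \eqref{a0}, $\frac{\diff v_1}{\diff t}=-\epsilon v_1^2-K_{1,\epsilon}\le -\epsilon v_1^2+\delta^2 m^2(t)$. Second, I would compare $v_1(t,x)$ against the "barrier" $(1-\epsilon^{-1}\delta)m(t)$: compute $\frac{\diff}{\diff t}\big[(1-\epsilon^{-1}\delta)m(t)\big]$ — here one needs that $m(t)$ is (Lipschitz, hence a.e.\ differentiable and) itself governed by the same ODE structure, namely $\frac{\diff m}{\diff t}=-\epsilon m^2-K_{1,\epsilon}(t,x_{\min}(t))\le -\epsilon m^2+\delta^2 m^2=-(\epsilon-\delta^2)m^2$ at a.e.\ $t$ where the infimum is attained (a Danskin/envelope argument). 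Third, set $w(t)=v_1(t,x)-(1-\epsilon^{-1}\delta)m(t)$; one has $w(t_1)>0$, and I would show $w$ cannot reach $0$ on $[t_1,t_2]$ by a continuity/first-crossing argument: if $w(t_*)=0$ at a first time $t_*$, evaluate $\frac{\diff w}{\diff t}$ at $t_*$ using both ODE bounds and the sign information on $m$, and derive $\frac{\diff w}{\diff t}(t_*)>0$ (or a differential-inequality contradiction), so $w$ stays positive. Then $v_1(t_2,x)>(1-\epsilon^{-1}\delta)m(t_2)$, i.e.\ $x\notin\Sigma_{\delta,\epsilon}(t_2)$, which is exactly the contrapositive of the claim.

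The hard part will be making the comparison at the crossing time genuinely sharp, because both $v_1(t_*,x)$ and $m(t_*)$ are negative and the dominant quadratic terms $-\epsilon v_1^2$ and $-\epsilon m^2$ are equal there (since $v_1(t_*,x)=(1-\epsilon^{-1}\delta)m(t_*)$ forces $v_1^2=(1-\epsilon^{-1}\delta)^2 m^2$, which is \emph{smaller}, not equal — this is in fact what saves the argument): the gain is precisely the factor $(1-\epsilon^{-1}\delta)^2<1$ in the quadratic term for $v_1$ versus the barrier, and one must check that this gain dominates the discrepancy coming from the $K_{1,\epsilon}$ forcing, using $\delta^2$ small (equivalently $\epsilon^{-1}\delta<1/2$). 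A secondary technical point is the non-smoothness of $m(t)$: I would handle it either via the fact that $m(t)=m(0)q^{-1}(t)$ with $q$ Lipschitz, or by replacing $m$ with $v_1(t,y)$ for the specific $y$ realizing membership and taking infima at the end, so that only the ODE \eqref{a0} for individual characteristics is used and no derivative of an infimum is needed.
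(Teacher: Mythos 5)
Your overall strategy -- a first-crossing/contradiction argument along characteristics, driven by the Riccati structure of \eqref{a0} and the standing bound $|K_{1,\epsilon}|\le\delta^2m^2(t)$ -- is the same as the paper's, but the execution differs in one genuine way. The paper never differentiates $m(t)$: assuming the containment fails, it reduces (by continuity) to $t_1,t_2$ close, writes the \emph{explicit} Riccati solution formulas \eqref{a2.1}--\eqref{a2.2} for two specific characteristics -- the one through the offending point $x_1$ and the one through a point $x_2$ realizing the infimum at $t_1$ -- and compares them via \eqref{a2.3}. Your primary route instead builds the barrier $w=v_1-(1-\epsilon^{-1}\delta)m$ and differentiates $(1-\epsilon^{-1}\delta)m(t)$ at the first crossing time via a Danskin-type envelope inequality. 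Your fallback ``replace $m$ with $v_1(t,y)$ for the $y$ realizing membership and take infima at the end'' is, in effect, exactly the paper's proof, and is the cleaner option: the envelope inequality for $\inf$ controls the \emph{right} Dini derivative from above, whereas at a first crossing time approached from the left you need a one-sided bound in the other direction, so the differential form of the argument needs some care to set up rigorously.

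Two concrete caveats on your crossing-time computation. First, carrying it out with $\lambda=1-\epsilon^{-1}\delta$ gives $\tfrac{\diff w}{\diff t}(t_*)\ge\big[\epsilon\lambda(1-\lambda)-\delta^2(1+\lambda)\big]m^2=\delta\big[\lambda(1-\delta)-\delta\big]m^2$, which is strictly positive only when $\lambda>\delta/(1-\delta)$; this holds for $\delta\le 1/3$ but can fail for $\delta$ close to $1/2$ (e.g.\ $\epsilon=1$, $\delta=0.45$), so your ``gain dominates the forcing'' step is not automatic from $\epsilon^{-1}\delta<1/2$ alone. This is a corner-case issue only: the theorems all take $\delta$ small (e.g.\ $\delta\le 1-\tfrac{\sqrt 3}{2}$), and the paper's own proof contains an analogous unstated restriction (it uses $4\delta^2<\tfrac12\delta$, i.e.\ $\delta<1/8$). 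Second, be careful with the direction of your inequalities: to keep $v_1$ above the barrier you need the \emph{lower} bound $\tfrac{\diff v_1}{\diff t}\ge-\epsilon v_1^2-\delta^2m^2$, not the upper bound you wrote in your first step; you clearly intend to use both, but the writeup should say so. With these points fixed (or by switching to your own fallback), the argument is correct.
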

 
 \begin{proof} Suppose that there exists some \(x_1\in\R\) such that
 	 \(x_1\notin\Sigma_{\delta,\epsilon}(t_1)\) but \(x_1\in\Sigma_{\delta,\epsilon}(t_2)\) for some \(0\leq t_1\leq t_2\leq T_1\), that is
 	\begin{equation}\label{a1}
 	\begin{aligned}
 	v_1(t_1,x_1)> (1-\epsilon^{-1}\delta)m(t_1)\quad \text{and}\quad v_1(t_2,x_1)\leq (1-\epsilon^{-1}\delta)m(t_2)<\frac{1}{2}m(t_2).
 	\end{aligned}
 	\end{equation}
 	One can choose \(t_1\) and \(t_2\) close so that
 	\begin{equation*}
 	\begin{aligned}
 	v_1(t,x_1)\leq \frac{1}{2}m(t),\quad \text{for\ all}\ t\in [t_1,t_2].
 	\end{aligned}
 	\end{equation*}
 	Indeed since \(v_1(\cdot,x_1)\) and \(m\) are uniformly continuous on \([0,T_1]\), let 
 	\begin{equation}\label{a2}
 	\begin{aligned}
 	v_1(t_1,x_2)=m(t_1)\leq \frac{1}{2}m(t_1).
 	\end{aligned}
 	\end{equation}
 	We may necessarily choose \(t_2\) close to \(t_1\) so that
 	\begin{equation*}
 	\begin{aligned}
 	v_1(t,x_2)\leq \frac{1}{2}m(t),\quad \text{for\ all}\ t\in [t_1,t_2].
 	\end{aligned}
 	\end{equation*}
 	
 	According to \eqref{9} (\eqref{31} or \eqref{56}), one has
 	\begin{equation*}
 	\begin{aligned}
 	|K_{1,\epsilon}(t,x_j)|\leq \delta^2 m^2(t)\leq 4\delta^2v_1^2(t,x_j)<\frac{1}{2}\delta v_1^2(t,x_j),
 	\end{aligned}
 	\end{equation*}
for all \(t\in[t_1,t_2],\ j=1,2\).
This together with \eqref{a0} yields
 	\begin{equation*}
 	\begin{aligned}
 	\frac{\diff v_1}{\diff t}(\cdot,x_1)=- \epsilon v_1^2(\cdot,x_1)-K_{1,\epsilon}(t,x_1)\geq (-\epsilon-\frac{\delta}{2})v_1^2(\cdot,x_1)
 	\end{aligned}
 	\end{equation*}
 	and
 	\begin{equation*}
 	\begin{aligned}
 	\frac{\diff v_1}{\diff t}(\cdot,x_2)=- \epsilon v_1^2(\cdot,x_2)-K_{1,\epsilon}(t,x_2)\leq (-\epsilon+\frac{\delta}{2})v_1^2(\cdot,x_2),
 	\end{aligned}
 	\end{equation*}
 for \(t\in [t_1,t_2]\). Solving the resulting two inequalities above gives
 	\begin{equation}\label{a2.1}
 	\begin{aligned}
 	v_1(t_2,x_1)\geq\frac{v_1(t_1,x_1)}{1+(\epsilon+\frac{\delta}{2})v_1(t_1,x_1)(t_2-t_1)},
 	\end{aligned}
 	\end{equation}
 	and
 	\begin{equation}\label{a2.2}
 	\begin{aligned}
 	v_1(t_2,x_2)\leq\frac{v_1(t_1,x_2)}{1+(\epsilon-\frac{\delta}{2})v_1(t_1,x_2)(t_2-t_1)}.
 	\end{aligned}
 	\end{equation}
 	
 	 Applying \eqref{a2} to \eqref{a2.2}, one obtains
 	\begin{equation}\label{a2.3}
 	\begin{aligned}
 	m(t_2)\leq\frac{m(t_1)}{1+(\epsilon-\frac{\delta}{2})m(t_1)(t_2-t_1)}.
 	\end{aligned}
 	\end{equation}
 In view of \eqref{a1} and \eqref{a2.1}, one estimates
 	\begin{equation*}
 	\begin{aligned}
 	v_1(t_2,x_1)&>\frac{(1-\epsilon^{-1}\delta)m(t_1)}{1+(\epsilon+\frac{\delta}{2})(1-\epsilon^{-1}\delta)m(t_1)(t_2-t_1)}\\
 	&>\frac{(1-\epsilon^{-1}\delta)m(t_1)}{1+(\epsilon-\frac{\delta}{2})m(t_1)(t_2-t_1)}\\
 	&>(1-\epsilon^{-1}\delta)m(t_2),
 	\end{aligned}
 	\end{equation*}
 where we have used \eqref{a2.3} in the last inequality. We get a contradiction!	

 \end{proof}

 \begin{lemma}\label{le:a2} We have
 	\begin{equation}\label{a3}
 	\begin{aligned}
 \epsilon(1+\epsilon^{-1}\delta)m(0)\leq \frac{\diff r}{\diff t}\leq \epsilon(1-\epsilon^{-1}\delta)m(0),
 	\end{aligned}
 	\end{equation}	
\begin{equation}\label{a4}
\begin{aligned}
q(t)\leq r(t)\leq (1-\epsilon^{-1}\delta)^{-1}q(t),
\end{aligned}
\end{equation}
and
\begin{equation}\label{a5}
\begin{aligned}
0<q(t)\leq 1.
\end{aligned}
\end{equation}

 \end{lemma}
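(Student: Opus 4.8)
The plan is to reduce \eqref{a3}--\eqref{a5} to a single pointwise inequality bounding $K_{1,\epsilon}$ by $v_1^2$ on $\Sigma_{\delta,\epsilon}$, after which \eqref{a3} and \eqref{a4} follow by differentiating the change of variables $v_1=m(0)r^{-1}$ and chasing signs, while \eqref{a5} follows from a minimum-principle argument for $m$. To obtain the pointwise inequality, fix $t\in[0,T_1]$ and $x\in\Sigma_{\delta,\epsilon}(t)$. By Lemma \ref{le:a1} we have $x\in\Sigma_{\delta,\epsilon}(\tau)$ for every $\tau\in[0,t]$, so $v_1(\tau,x)\le(1-\epsilon^{-1}\delta)m(\tau)<0$ and hence $m^2(\tau)\le(1-\epsilon^{-1}\delta)^{-2}v_1^2(\tau,x)$. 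Inserting this into the a priori bound $|K_{1,\epsilon}(\tau,x)|\le\delta^2 m^2(\tau)$ (i.e., \eqref{9}, \eqref{31} or \eqref{56}) and using $\delta\le(1-\epsilon^{-1}\delta)^2$, which is a consequence of the hypotheses of the respective theorems, we arrive at
\[
|K_{1,\epsilon}(\tau,x)|\le\delta\, v_1^2(\tau,x),\qquad \tau\in[0,t],\quad x\in\Sigma_{\delta,\epsilon}(t).
\]

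For \eqref{a3}, differentiating $v_1=m(0)r^{-1}$ in $t$ and eliminating $\frac{\diff v_1}{\diff t}$ via \eqref{a0} gives $\frac{\diff r}{\diff t}=\epsilon\, m(0)+\frac{m(0)}{v_1^2}K_{1,\epsilon}$; since $m(0)<0$, the displayed bound yields $\big|\frac{m(0)}{v_1^2}K_{1,\epsilon}\big|\le-\delta\, m(0)$, which confines $\frac{\diff r}{\diff t}$ to the interval between $(\epsilon+\delta)m(0)=\epsilon(1+\epsilon^{-1}\delta)m(0)$ and $(\epsilon-\delta)m(0)=\epsilon(1-\epsilon^{-1}\delta)m(0)$, that is, \eqref{a3}. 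For \eqref{a4}, from $m(t)\le v_1(t,x)\le(1-\epsilon^{-1}\delta)m(t)<0$ — the first inequality being the definition of the infimum, the second that of $\Sigma_{\delta,\epsilon}(t)$ — one takes reciprocals and then multiplies by $m(0)<0$, each operation reversing the chain once, to obtain $q(t)\le r(t,x)\le(1-\epsilon^{-1}\delta)^{-1}q(t)$, recalling that $q=m(0)/m(\cdot)$ and $r=m(0)/v_1$.

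It remains to prove \eqref{a5}. Here $q(0)=1$ and $q(t)>0$ are immediate from $q=m(0)/m(t)$ with $m(0),m(t)<0$, so the content is $q(t)\le1$, i.e., that $m$ is non-increasing on $[0,T_1]$. On the (relatively open, nonempty) subset of $[0,T_1]$ where $m<0$, the regularity $u(\tau,\cdot)\in H^2$ (or $H^3$) makes $\partial_x u(\cdot,\tau)$ vanish at infinity, so the infimum defining $m(\tau)$ is attained at some $x^*(\tau)\in\Sigma_{\delta,\epsilon}(\tau)$; evaluating \eqref{a0} there and invoking the displayed bound gives $\frac{\diff v_1}{\diff t}(\tau,x^*(\tau))\le-(\epsilon-\delta)v_1^2(\tau,x^*(\tau))\le0$ (note $\epsilon>\delta$ since $\epsilon^{-1}\delta<\frac{1}{2}$). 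The elementary comparison $m(\tau+h)\le v_1(\tau+h,x^*(\tau))$ for small $h>0$, together with the continuity of $m$, then forces the upper right Dini derivative of $m$ to be $\le0$ there, so $m$ is non-increasing on that subset; a routine continuity argument starting from $m(0)<0$ extends this to all of $[0,T_1]$, giving $m(t)\le m(0)<0$ and hence $0<q(t)\le1$. I expect this last step — in particular, justifying the differentiation of $m(\tau)=\inf_x v_1(\tau,x)$ — to be the only point requiring real care; the rest is bookkeeping once the pointwise inequality is established.
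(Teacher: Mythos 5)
Your proof is correct. For \eqref{a3} and \eqref{a4} it is essentially the paper's argument: the paper integrates \eqref{a0} into the closed form \eqref{a6.5}, which upon differentiation is exactly your identity $\frac{\diff r}{\diff t}=\epsilon m(0)\bigl(1+\epsilon^{-1}v_1^{-2}K_{1,\epsilon}\bigr)$, and the same reciprocal-and-sign chase from $m(t)\le v_1\le(1-\epsilon^{-1}\delta)m(t)<0$ gives \eqref{a4}; your pointwise bound $|K_{1,\epsilon}|\le\delta v_1^{2}$ on $\Sigma_{\delta,\epsilon}$ is the paper's $|v_1^{-2}K_{1,\epsilon}|<\delta$, and both of you rely on a smallness of $\delta$ (namely $\delta\le(1-\epsilon^{-1}\delta)^{2}$) that holds under the theorems' hypotheses though not under the bare standing assumptions $\delta<\tfrac12$, $\epsilon^{-1}\delta<\tfrac12$ of the appendix — the paper is equally cavalier here, even writing $(1-\delta)^{-2}$ where the estimate produces $(1-\epsilon^{-1}\delta)^{-2}$. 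The genuine difference is in \eqref{a5}. The paper deduces from \eqref{a3} that $r(\cdot,x)$, hence $v_1(\cdot,x)$, is decreasing for every $x\in\Sigma_{\delta,\epsilon}(T_1)$ and then asserts via \eqref{1} that $q$ is decreasing; this last step leaves implicit that the infimum defining $m(t)$ is tracked by points for which that monotonicity is available on the whole time interval, and the inclusion $\Sigma_{\delta,\epsilon}(T_1)\subset\Sigma_{\delta,\epsilon}(t)$ from Lemma \ref{le:a1} points the wrong way for the minimizer at an earlier time. Your Dini-derivative argument at the spatial minimizer $x^{*}(\tau)$ supplies exactly what is missing: $x^{*}(\tau)$ lies in $\Sigma_{\delta,\epsilon}(\tau)$ trivially, only the instantaneous bound $|K_{1,\epsilon}(\tau,x^{*}(\tau))|\le\delta^{2}m^{2}(\tau)$ is used, attainment of the infimum follows from $\partial_xu(\cdot,\tau)\in H^{1}(\R)$ vanishing at infinity together with $m(\tau)<0$ (which is already \eqref{2}, so your final continuity step is not even needed), and the comparison $m(\tau+h)\le v_1(\tau+h,x^{*}(\tau))$ gives a nonpositive upper right Dini derivative of $m$. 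Both routes conclude that $m$ is non-increasing and hence $0<q\le1$ by \eqref{3}; yours is slightly longer but more airtight at the one point where care is actually required.
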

 
 \begin{proof} 
 	Let \(x\in\Sigma_{\delta,\epsilon}(T_1)\), it then follows from Lemma \ref{le:a1} that
 	\begin{equation}\label{a6}
 	\begin{aligned}
 	m(t)\leq v_1(t,x)\leq (1-\epsilon^{-1}\delta)m(t), \quad \text{for\ all}\ t\in [0,T_1].
 	\end{aligned}
 	\end{equation}	
 	The solution of \eqref{a0} can be expressed 	
 	\begin{equation}\label{a6.5}
 	\begin{aligned}
 	v_1(t,x)=\frac{v_1(0,x)}{1+ \epsilon v_1(0,x)\int_0^t\big(1+\epsilon^{-1}v_1^{-2}K_{1,\epsilon}(\tau,x)\big)\,\diff \tau}=m(0)r^{-1}(t,x).
 	\end{aligned}
 	\end{equation}	
It follows from \eqref{a6} and \eqref{9} (\eqref{31} or \eqref{56}) that 
 	\begin{equation*}
 	\begin{aligned}
 	|v_1^{-2}K_{1,\epsilon}(t,x)|\leq (1-\delta)^{-2}\delta^2<\delta,\quad \text{for\ all}\ t\in [0,T_1].
 	\end{aligned}
 	\end{equation*}	
This together with \eqref{a6.5} implies \eqref{a3}. The inequality \eqref{a4} is a consequence of \eqref{a6} and \eqref{a6.5}. It is easy to see that \(r(t,x)\) is decreasing for all \(t\in [0,T_1]\) from \eqref{a3}, and hence \(v_1(t,x)\) too. 
Furthermore, by \eqref{1}, \(q(t)\) is also decreasing for all \(t\in [0,T_1]\)， which implies \eqref{a5} by \eqref{3}.

 \end{proof}
 
 \begin{lemma}\label{le:a3} It holds that
 	\begin{equation}\label{a7}
 	\begin{aligned}
 	\int_0^tq^{-s}(\tau)\,\diff \tau\leq -(1-\epsilon^{-1}\delta)^{-(s+1)}(1-s)^{-1}m^{-1}(0)[(1-\epsilon^{-1}\delta)^{s-1}-q^{1-s}(t)],
 	\end{aligned}
 	\end{equation}	
where\(s>0, s\neq 1\), and	
 	\begin{equation}\label{a8}
 	\begin{aligned}
 		\int_0^tq^{-1}(\tau)\,\diff \tau\leq -(1-\epsilon^{-1}\delta)^{-2}m^{-1}(0)[-\log (1-\epsilon^{-1}\delta)-\log q(t)].
 	\end{aligned}
 	\end{equation}
	
 \end{lemma}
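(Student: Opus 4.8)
The plan is to reduce both \eqref{a7} and \eqref{a8} to a single elementary change of variables. Fix any $x\in\Sigma_{\delta,\epsilon}(T_1)$ (the set is nonempty since $m(t)$ is attained). By \eqref{a4} one has $q(\tau)\geq(1-\epsilon^{-1}\delta)\,r(\tau,x)$ for $\tau\in[0,T_1]$, hence $q^{-s}(\tau)\leq(1-\epsilon^{-1}\delta)^{-s}r^{-s}(\tau,x)$ for every $s>0$, so it suffices to bound $\int_0^t r^{-s}(\tau,x)\,\diff\tau$. By \eqref{a3} the function $r(\cdot,x)$ is $C^1$ on $[0,T_1]$ with
\begin{equation*}
\frac{\diff r}{\diff\tau}(\tau,x)\leq\epsilon(1-\epsilon^{-1}\delta)m(0)<0,
\end{equation*}
since $m(0)<0$ and $\epsilon^{-1}\delta<\tfrac12$; in particular $r(\cdot,x)$ is strictly decreasing, it maps $[0,t]$ bijectively onto $[r(t,x),r(0,x)]$, and $-r'(\tau)=|r'(\tau)|\geq-\epsilon(1-\epsilon^{-1}\delta)m(0)>0$.

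I would then insert the factor $(-r')/(-r')$ and substitute $\rho=r(\tau,x)$:
\begin{equation*}
\int_0^t r^{-s}(\tau,x)\,\diff\tau=\int_0^t\frac{r^{-s}(\tau,x)\bigl(-r'(\tau)\bigr)}{-r'(\tau)}\,\diff\tau\leq\frac{1}{-\epsilon(1-\epsilon^{-1}\delta)m(0)}\int_{r(t,x)}^{r(0,x)}\rho^{-s}\,\diff\rho .
\end{equation*}
For $s\neq1$ the last integral equals $(1-s)^{-1}\bigl[r(0,x)^{1-s}-r(t,x)^{1-s}\bigr]$, and for $s=1$ it equals $\log r(0,x)-\log r(t,x)$. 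The endpoints are controlled by \eqref{a4}--\eqref{a5}: evaluated at $\tau=0$ (using $q(0)=1$ from \eqref{3}) they give $1\leq r(0,x)\leq(1-\epsilon^{-1}\delta)^{-1}$, and at $\tau=t$ they give $r(t,x)\geq q(t)$.

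The \emph{only point that requires care} is the sign bookkeeping, since $1-s$ changes sign at $s=1$: for $0<s<1$ the power $\rho\mapsto\rho^{1-s}$ is increasing, so one uses $r(0,x)\leq(1-\epsilon^{-1}\delta)^{-1}$ to estimate $r(0,x)^{1-s}$ from above and $r(t,x)\geq q(t)$ to estimate $r(t,x)^{1-s}$ from below; for $s>1$ the power is decreasing and the two monotonicities are reversed. In both cases this yields
\begin{equation*}
(1-s)^{-1}\bigl[r(0,x)^{1-s}-r(t,x)^{1-s}\bigr]\leq(1-s)^{-1}\bigl[(1-\epsilon^{-1}\delta)^{s-1}-q^{1-s}(t)\bigr],
\end{equation*}
and multiplying by the prefactor $(1-\epsilon^{-1}\delta)^{-s}\bigl(-\epsilon(1-\epsilon^{-1}\delta)m(0)\bigr)^{-1}$ gives \eqref{a7} (the stated constant is the value at $\epsilon=1$, which is the case used in Sections~\ref{sec3}--\ref{sec5}; for general $\epsilon$ one carries the additional factor $\epsilon^{-1}$, consistent with \eqref{30.5}). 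The case $s=1$ is easier because both logarithms are estimated monotonically, $\log r(0,x)\leq-\log(1-\epsilon^{-1}\delta)$ and $-\log r(t,x)\leq-\log q(t)$, producing \eqref{a8}. There is no genuine obstacle beyond keeping each inequality pointed in the right direction; the only regularity used — continuity of $v_1(\cdot,x)$ and $m$ — has already entered in Lemmas~\ref{le:a1}--\ref{le:a2} through \eqref{a6.5} and the definition of $\Sigma_{\delta,\epsilon}$.
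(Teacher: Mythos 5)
Your proof is correct and follows essentially the same route as the paper: bound $q^{-s}$ by $(1-\epsilon^{-1}\delta)^{-s}r^{-s}$ via \eqref{a4}, convert $\diff\tau$ to $\diff r$ using the derivative bound \eqref{a3}, integrate the power, and control the endpoints with \eqref{a4} and $q(0)=1$. Your explicit tracking of the sign of $(1-s)^{-1}$ in the two cases $0<s<1$ and $s>1$, and your remark about the extra factor $\epsilon^{-1}$ for $\epsilon<1$, are both accurate and if anything slightly more careful than the paper's one-line computation.
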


 \begin{proof} 
 	Let \(s>0, s\neq 1\), we use \eqref{a3} and \eqref{a4} to deduce that
 	\begin{equation*}
 	\begin{aligned}
 	&\int_0^tq^{-s}(\tau)\,\diff \tau
 	\leq (1-\epsilon^{-1}\delta)^{-s}\int_0^tr^{-s}(\tau,x)\,\diff \tau\\
 	&\leq (1-\epsilon^{-1}\delta)^{-(s+1)}m^{-1}(0)\int_0^tr^{-s}(\tau,x)\frac{\diff }{\diff t}r(\tau,x)\,\diff \tau\\
 	&=(1-\epsilon^{-1}\delta)^{-(s+1)}(1-s)^{-1}m^{-1}(0)[r^{s-1}(t,x)-r^{1-s}(0,x)],
 	\end{aligned}
 	\end{equation*}	
which combines \eqref{a4} implies \eqref{a7}. One can verify \eqref{a8} similarly.
 	
 \end{proof}

 \vspace{0.5cm}
\noindent {\bf Acknowledgments.} The work of both authors was partially  supported by the ANR project ANuI.

\end{document}